\newif\ifnoweb
\begin{document}



\let\goth\mathfrak

\def\myend{{}\hfill{\small$\bigcirc$}}

\newenvironment{ctext}{%
  \par
  \smallskip
  \centering
}{%
 \par
 \smallskip
 \csname @endpetrue\endcsname
}

\def\id{\mathrm{id}}

\newcommand{\msub}{\mbox{\large$\goth y$}}    
\def\suport{{\mathrm{supp}}}
\newcommand{\nat}{{N}}   
\def\bagnom#1#2{\genfrac{[}{]}{0pt}{}{#1}{#2}}

\def\kros(#1,#2){c_{\{ #1,#2 \}}}
\def\skros(#1,#2){{\{ #1,#2 \}}}
\def\LineOn(#1,#2){\overline{{#1},{#2}\rule{0em}{1,5ex}}}
\def\inc{\mathrel{\strut\rule{3pt}{0pt}\rule{1pt}{9pt}\rule{3pt}{0pt}\strut}}
\def\lines{{\cal L}}
\def\tran{{\mathscr T}}
\def\collin{\sim}
\def\wspolin{{\bf L}}
\def\ncollin{\not\collin}

\def\VerSpace(#1,#2){{\bf V}_{{#2}}({#1})}
\def\GrasSpace(#1,#2){{\bf G}_{{#2}}({#1})}
\def\VeblSpSymb{{\bf P}\mkern-10mu{\bf B}}
\def\VeblSpace(#1){\VeblSpSymb({#1})}
\def\xwpis#1#2{{{\triangleright}\mkern-4mu{{\strut}_{{#1}}^{{#2}}}}}
\def\MultVeblSymb{{\sf M}\mkern-10mu{\sf V}}        
\def\MultVeblSymq{{\sf T}\mkern-6mu{\sf P}}        
\def\xwlep(#1,#2,#3,#4){{\MultVeblSymb^{#1}{\xwpis{#2}{#3}}{#4}}}
\def\xwlepp(#1,#2,#3,#4,#5){{\MultVeblSymb_{#1}^{#2}{\xwpis{#3}{#4}}{#5}}}
\def\xwlepq(#1,#2,#3,#4,#5){{\MultVeblSymq_{#1}^{#2}{\xwpis{#3}{#4}}{#5}}}
\def\konftyp(#1,#2,#3,#4){\left( {#1}_{#2}\, {#3}_{#4} \right)}
\def\binokonf(#1){\konftyp({\binom{{#1}}{2}},{{#1}-2},\binom{{#1}}{3},3)}
\let\binoconf\binokonf


\newcount\liczbaa
\newcount\liczbab

\def\binkonfo(#1,#2){\liczbaa=#2 \liczbab=#2 \advance\liczbab by -2
\def\doa{\ifnum\liczbaa = 0\relax \else
\ifnum\liczbaa < 0 \the\liczbaa \else +\the\liczbaa\fi\fi}
\def\dob{\ifnum\liczbab = 0\relax \else
\ifnum\liczbab < 0 \the\liczbab \else +\the\liczbab\fi\fi}
\konftyp(\binom{#1\doa}{2},#1\dob,\binom{#1\doa}{3},3) }

\newcount\liczbac
\def\binkonf(#1,#2){\liczbac=#2 
\def\docc{\ifnum\liczbac = 0 \relax \else\ifnum\liczbac < 0\the\liczbac 
\else+\the\liczbac\fi\fi}
B_{#1{\docc}}}


\def\Ver{{\sf\bfseries VER}}
\def\Des{{\sf\bfseries DES}}
\def\Desv{{\sf\bfseries DES'}}
\def\Desr{{\sf\bfseries DES''}}
\def\MTC{{\sf STP}}
\def\MVC{{\sf MVC}}

\def\PSTS{{\sf PSTS}}
\def\BSTS{{\sf BSTS}}


\title[Binomial \PSTS's containing complete graphs]{%
Binomial partial Steiner triple systems containing complete graphs}

\author{
Ma{\l}gorzata Pra{\.z}mowska%
\footnote{{\em Corresponding Author}, e-mail: {\ttfamily malgpraz@math.uwb.edu.pl},
telephone: +48 606 74 64 56}, 
Krzysztof Pra{\.z}mowski}

\maketitle

\begin{abstract}
  We propose a new approach to studies on partial Steiner triple systems consisting
  in determining complete graphs contained in them.
  We establish the structure which complete graphs yield in a minimal \PSTS\ that
  contains them. As a by-product we introduce the notion of a binomial \PSTS\
  as a configuration with parameters of a minimal \PSTS\ with a complete subgraph.
  A representation of binomial \PSTS\ with at least a given number of its maximal
  complete subgraphs is given in terms of systems of perspectives.
  \ifnoweb
  Finally, we prove that for each admissible integer there is a binomial \PSTS\ with
  this number of maximal complete subgraphs. 
  \fi
  \begin{flushleft}
    Mathematics Subject Classification (2010): 05B30 (05C51, 05B40).\\
    Key words: binomial configuration, generalized Desargues configuration, complete graph.
  \end{flushleft}
\end{abstract}

\section*{Introduction}

In the paper we investigate the structure which (may) yield complete graphs 
contained in a (partial) Steiner triple system (in short: in a \PSTS).
Our problem is, in fact, a particular instance of a general question,
investigated in the literature, which {\sf STS}'s (more generally: which
{\sf PSTS}'s) contain/do not contain a configuration of a prescribed
type.
Here belongs, e.g. the problem to determine Pasch-free configurations
in some definite classes of ({\sf P}){\sf STS}'s  (\cite{antipasch1}), 
or to characterize {\sf STS}'s
which do not contain the mitre configuration (e.g.: \cite{antimitre1}
\cite{antimitre2})). 
It resembles also (and, in a sense, generalizes) the
problem to 
determine triangle-free (e.g. \cite{trianglefree2}, \cite{trianglefree3}), 
quadrangle-free (e.g. \cite{trianglefree1}), 
and other $\cal K$-free (e.g. \cite{6sparse}, \cite{kwardfree})
configurations, where $\cal K$ is a fixed class of configurations (of a
special importance).
In our case configurations in question are 
complete graphs and so-called binomial configurations.

In essence, in what follows, speaking about a (complete) graph $G$ contained in
a configuration $\goth M$ we always assume that $G$ is {\em freely} contained 
in $\goth M$ i.e. it is not merely a subgraph of 
the collinearity (point-adjacency) graph of $\goth M$ 
but also distinct edges of $G$ lie on distinct lines (so called {\em sides})
of $\goth M$, and sides do not intersect outside $G$.
Note that following this terminology a complete quadrangle on a projective plane $\goth P$
{\em is not} a $K_4$-graph {\em contained} in $\goth P$.

\medskip
There are two main problems that this theory starts with.
Firstly: 
{\em what are the minimal parameters of a \PSTS\ necessary to contain a $K_n$-graph}
and 
{\em does there exist  a \PSTS\ with these parameters which contains/does not contain
a $K_n$-graph}.
It turns out that a minimal (with respect to its size) \PSTS\ that contains $K_n$ is a
so called {\em binomial configuration} i.e. a $\konftyp(v,r,b,3)$-configuration
such that
$v =\binom{m}{2}$, $r = m-2$, and $b = \binom{m}{3}$ for a positive integer $m$ ($=n+1$)
(Prop. \ref{minSTS2pelny}).
We say, in short, that 
this \PSTS\ is {\em a minimal configuration} which contains $K_n$.

Several classes of binomial configurations were introduced and studied in the literature
(see \cite[combinatorial Grassmannians]{perspect}, 
\cite[combinatorial Veronesians]{combver},
\cite[multiveblen configurations]{pascvebl}, \cite{mveb2proj}, \cite{skewgras}).
The above characterization of the parameters of a binomial configuration gives 
a good motivation and justification
for investigating this class from `a general perspective'.
In most of the binomial configurations already defined in the literature a suitable
complete graph can be found.
As the best known example of a binomial configuration
we can quote generalized Desargues configuration
(see \cite{klimczak}, \cite{doliwa1}, \cite{doliwa2}).
Then the answer to the `dual' question 
{\em what is the maximal size of a complete graph that a binomial
$\binkonfo(n,0)$-configuration $\goth M$ may contain}
easily follows: this size is $n-1$.
In this case we say that $K_n$ is a maximal (complete) subgraph of $\goth M$.
However, there are $\binkonfo(n,0)$-configurations that do not contain any $K_{n-1}$-graph.
The simplest example can be found for $n=5$: it is known that there are 
$10_3$-configurations without $K_4$.
(cf. \cite{betten}, \cite{klik:VC}).

\medskip
The second problem is read as follows: 
provided a $\binkonfo(n,1)$-configuration contains a $K_n$,
{\em what is the possible number of $K_n$-graphs contained in $\goth M$ and what is the
structure they yield}.
It turns  out that the maximal number of such $K_n$-subgraphs is $n+1$
(Prop. \ref{prop:maxK}).
Binomial configurations with the maximal number of $K_n$-subgraphs are exactly
the generalized Desargues configurations.
This fact points out once more a special position of the class of generalized 
Desargues configurations within the class of binomial configurations.

As we already mentioned,
the problem whether a given binomial $\binkonfo(n,1)$-configuration $\goth M$
contains a complete $K_n$-graph is slightly similar to the problem if a \PSTS\ contains 
a Pasch configuration. Indeed, $\goth M$ contains such a graph iff it contains
a binomial $\binkonfo(n,0)$-configuration 
(Prop. \ref{pelny2horyzont}, Cor.\ref{cor:pelny2horyzont}). 
So, (binomial) configurations 
(of size $\binom{n+1}{2}$)
without $K_n$-graphs are exactly the configurations whose no subspace is 
a binomial configuration of one-step-smaller size $\binom{n}{2}$.
\par
A binomial $\binkonfo(n,1)$-configuration with two $K_n$-subgraphs can be considered
as an abstract scheme of a perspective of two $n$-simplices
(Prop. \ref{prop:cross-compl3}).
Generally, the structure of the intersection points 
of $K_n$-subgraphs of a $\binkonfo(n,1)$-configuration
is isomorphic to a generalized Desargues configuration (Prop. \ref{prop:maxK}).

In the paper we do not intend to give a {\em detailed} analysis of the internal structure
of binomial configurations which 
contain a {\em prescribed} number of their maximal $K_n$-subgraphs.
We give only a technique (a procedure) to construct a binomial configuration which contains
at least the given number of $K_n$-subgraphs: Thm. \ref{thm:SPS}
And we prove that for each admissible integer $m$ there does exist a binomial configuration
which freely contains $m$ $K_n$-subgraphs.
Some remarks are also
made which show that, surprisingly, binomial configurations with `many' 
maximal complete subgraphs 
(the maximal admissible number, the maximal number reduced by 2, 
and the maximal number reduced by 3)
are the configurations of some well known classes.


\section{Definitions}


Recall that the term {\em combinatorial configuration} (or simply a {\em configuration}) is 
usually (cf. e.g. \cite{grop1}, \cite{steiniz}) 
used for a finite incidence point-line structure provided 
that two different points are incident with at most one line and the line size and 
point rank are constant.
Speaking more precisely, a {\em  $\konftyp(v,r,b,\varkappa)$-configuration} 
is a combinatorial configuration with 
$v$ points and $b$ lines such that
there are $r$ lines through each point, and there are $\varkappa$ points on each line.
A partial Steiner triple system (a \PSTS, in short) is a configuration 
whose each line contains exactly three points.

In what follows we shall pay special attention to so called 
{\em binomial configurations} (more precisely: binomial partial Steiner triple systems,
\BSTS, in short) i.e. to $\binkonfo(n,0)$-configurations with arbitrary integer $n\geq 2$.
The class of the configurations with these parameters will be denoted by $\binkonf(n,0)$.


Let $k$ be a positive integer and $X$ a set; we write $\sub_k(X)$ for the 
set of all $k$-subsets of $X$.
The incidence structure
\begin{ctext}
  $\GrasSpace(X,k) := \struct{\sub_k(X),\sub_{k+1}(X),\subset}$
\end{ctext}
will be called {\em a combinatorial Grassmannian} (cf. \cite{perspect}).
If $|X| = n$ then $\GrasSpace(X,2)$ is a $\binkonf(n,0)$-configuration, so it is 
a \BSTS. As the structure $\GrasSpace(X,2)$ is (up to an isomorphism) uniquely determined
by the cardinality $|X|$ in what follows we frequently write $\GrasSpace(|X|,2)$ instead
of $\GrasSpace(X,2)$. 
Recall that $\GrasSpace(4,2)$ is the Veblen (the Pasch) configuration and $\GrasSpace(5,2)$
is the Desargues Configuration (see e.g. \cite{combver}, \cite{perspect}).
Generally, every structure $\GrasSpace(n,2)$, $n\geq 5$ will be called
a {\em generalized Desargues configuration} 
(cf. \cite{skewgras}, \cite{doliwa1}, \cite{doliwa2}).

\medskip
Let $X$ be a non void set, $n = |X|$. A nondirected loopless graph
defined on $X$ (we say simply: a {\em graph})  is a structure of the form 
$\struct{X,{\cal P}}$ with ${\cal P}\subset \sub_2(X)$.
We write $K_X = \struct{X,\sub_2(X)}$ 
for the complete graph with the vertices $X$;
the term $K_n$ is used for an arbitrary graph $K_X$ where $|X| = n$.

We say that a configuration ${\goth N} = \struct{X,{\cal G}}$ 
{\em is contained in} a configuration ${\goth M} = \struct{S,\lines}$ if 
\begin{itemize}\itemsep-2pt\def\labelitemi{-}
\item
  $X \subset S$.
\item
  each line $L\in{\cal G}$ extends (uniquely) to some $\overline{L}\in\lines$, and
\item
  distinct lines of $\goth N$ extend to distinct lines of $\goth M$.
\end{itemize}
$\goth N$ is {\em l-closed} (is a {\em  subspace} of $\goth M$) if each line of 
$\goth M$ that crosses $X$ in at least two points is an extension of a line in $\cal G$.
In that case we also say that $\goth N$ is {\em regularly} contained in $\goth M$.
Frequently, with a slight abuse of language, we refer to lines of the form $\overline{L}$
($L\in{\cal G}$) as to {\em sides} of $\goth N$.
\par\noindent
${\goth N}$ is {\em p-closed} if its sides do no intersect outside $\goth N$ i.e. 
when the following holds:
\begin{ctext}
  $L_1,L_2\in{\cal G},\; L_1\neq L_2,\; p\in\overline{L_1}\cap \overline{L_2}
  \implies p \in X$
\end{ctext}
A graph contained and p-closed in $\goth M$ is said to be {\em freely contained in} $\goth M$.
In what follows the phrases
\begin{itemize}\def\labelitemi{}\itemsep-2pt
\item {\em $X$ is a $K_n$-graph (freely) contained in $\goth M$} and
\item {\em $K_X$ is (freely) contained in $\goth M$}
\end{itemize}
will be used equivalently, together with their (admissible) stylistic variants.


\section{Complete subgraphs freely contained in \PSTS's}

Clearly, there are configurations with freely contained subgraphs.
%
\begin{prop}\label{pelny2minSTS}[{\normalfont cf. \cite{klimczak} or \cite{pascvebl}}]
  Let $n\geq 2$ be an integer. 
  $\GrasSpace(n+1,2)$ is a $\binkonf(n,+1)$-configuration which freely contains $K_n$.
\end{prop}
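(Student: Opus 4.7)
\medskip\noindent\textbf{Proof proposal.}
The plan is to make the obvious choice of $K_n$ in $\GrasSpace(n+1,2)$ and verify the two requirements (containment and $p$-closedness) directly from the set-theoretic definition of the Grassmannian. There is no real obstacle here; the whole argument is a combinatorial bookkeeping, so I will just say how each step goes.

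First I would check the parameters. Fix $X$ with $|X|=n+1$. The point set $\sub_2(X)$ has cardinality $\binom{n+1}{2}$, the line set $\sub_3(X)$ has cardinality $\binom{n+1}{3}$, every line (a $3$-subset) contains exactly $3$ points (its $2$-subsets), and every point $\{u,v\}$ is contained in precisely $n-1$ triples (one for each element of $X\setminus\{u,v\}$). Hence $\GrasSpace(n+1,2)$ is a $\konftyp(\binom{n+1}{2},n-1,\binom{n+1}{3},3)$-configuration, i.e.\ a $\binkonfo(n,+1)$-configuration; the fact that no two distinct $2$-subsets lie in more than one common triple shows it is indeed a \PSTS.

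Next I would exhibit the graph. Pick an arbitrary $a\in X$ and set
\begin{ctext}
$Y_a:=\bigl\{\{a,x\}\colon x\in X\setminus\{a\}\bigr\}\subset\sub_2(X),$
\end{ctext}
so $|Y_a|=n$. For any two distinct vertices $\{a,x\},\{a,y\}\in Y_a$ the unique triple containing both of them is $\{a,x,y\}$, so every edge of $K_{Y_a}$ extends to a line of $\GrasSpace(n+1,2)$, and distinct edges $\{x,y\}\neq\{x',y'\}$ give distinct triples $\{a,x,y\}\neq\{a,x',y'\}$. This shows $K_{Y_a}$ is contained in $\GrasSpace(n+1,2)$ in the sense of the paper.

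Finally, $p$-closedness. The sides of $K_{Y_a}$ are precisely the triples $\{a,x,y\}$ with $x,y\in X\setminus\{a\}$, $x\neq y$. Suppose two distinct sides $L_1=\{a,x,y\}$ and $L_2=\{a,u,v\}$ share a point $p\in\sub_2(X)$; then $p\subseteq L_1\cap L_2$. Since $L_1\neq L_2$ we have $\{x,y\}\neq\{u,v\}$, so $|L_1\cap L_2|\leq 2$, and because $a$ belongs to both triples, equality forces $L_1\cap L_2=\{a,z\}$ for some $z\in\{x,y\}\cap\{u,v\}$. Hence $p=\{a,z\}\in Y_a$, which is exactly the $p$-closedness condition. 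Therefore $K_{Y_a}$ is freely contained in $\GrasSpace(n+1,2)$, completing the proof. \qed
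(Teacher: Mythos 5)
Your proof is correct. The paper itself gives no proof of this proposition (it only cites \cite{klimczak} and \cite{pascvebl}), but your choice of the clique $Y_a=\{\{a,x\}\colon x\in X\setminus\{a\}\}$ is exactly the construction the paper relies on elsewhere (the sets $S(i)$ in the proof of Corollary~\ref{cor:maxKgras}), and your verification of the parameters, of containment, and of $p$-closedness is complete and accurate.
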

\begin{prop}\label{minSTS2pelny}
  Let $n\geq 2$ be an integer.
  A smallest \PSTS\ that freely contains the complete graph $K_n$ is a 
  $\binkonf(n,+1)$-configuration.
  Consequently, it is a \BSTS.
\end{prop}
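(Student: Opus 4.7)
The plan is to combine the existence in Proposition~\ref{pelny2minSTS} with a matching parameter lower bound derived by counting the ``third points'' of the sides. I would take an arbitrary \PSTS\ $\goth M = \struct{S,\lines}$ freely containing $K_X$ with $|X|=n$, and associate to each edge $e=\{x,y\}\subseteq X$ the unique extending side $\overline e\in\lines$ together with its third point $z_e$. The goal is then to establish two claims: (1) $z_e\notin X$; and (2) the assignment $e\mapsto z_e$ is injective. Together these immediately yield $|S|\ge n+\binom{n}{2}=\binom{n+1}{2}$.

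For claim (1), if one had $z_e=v\in X\setminus\{x,y\}$, then $\overline e=\{x,y,v\}$ would be the unique line of $\goth M$ through any of the three pairs $\{x,y\}$, $\{x,v\}$, $\{y,v\}$, so all three distinct edges of $K_X$ would extend to the same side, contradicting the requirement from the definition of containment that distinct edges extend to distinct sides. For claim (2), if $z_e = z_{e'}$ with $e\neq e'$, the two distinct sides $\overline e$, $\overline{e'}$ would meet at $z_e\notin X$, which $p$-closedness forbids. Both arguments are short, but (1) is exactly where the distinction between free and merely regular containment bites, since a regular containment would permit the triangle $\{x,y,v\}$ to collapse onto a single side and drastically lower the point count.

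Once the bound on $|S|$ is in place, I would read off the remaining parameters from two routine observations: each $x\in X$ lies on the $n-1$ pairwise distinct sides extending the edges of $K_X$ incident to $x$, so the (constant) rank $r$ of $\goth M$ satisfies $r\ge n-1$; the incidence identity $|S|\cdot r=3|\lines|$ then yields $|\lines|\ge\binom{n+1}{3}$. Because $\GrasSpace(n+1,2)$ simultaneously attains all three lower bounds by Proposition~\ref{pelny2minSTS}, a smallest \PSTS\ freely containing $K_n$ must realize the parameters $\binkonfo(n,+1)$ and hence be a \BSTS. I expect claim (1) to be essentially the only real obstacle; everything else is double-counting.
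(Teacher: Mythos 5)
Your proof is correct and follows essentially the same route as the paper: the point bound comes from showing that the third points $z_e$ of the sides lie outside $X$ and are pairwise distinct (the paper leaves these two facts implicit in the line ``there is a third point $\infty_e\in\overline{e}\setminus e$''; you supply exactly the right justifications), and the line bound then matches $\binom{n+1}{3}$, attained by $\GrasSpace(n+1,2)$. The only real divergence is in how the line count is obtained: the paper counts, locally, the lines through the $\binom{n}{2}$ new points that are not sides of $K_X$, getting $b_0\geq\binom{n}{3}$ additional lines, whereas you use the global incidence identity $|S|\cdot r=3|\lines|$ together with $r\geq n-1$ and $|S|\geq\binom{n+1}{2}$; both yield $\binom{n+1}{3}$ and yours is marginally cleaner. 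One correction to an aside: your claim (1) rests on the clause in the definition of \emph{containment} that distinct edges extend to distinct lines, and this clause is equally in force for regular containment, so a regularly contained $K_n$ could not collapse the triangle $\{x,y,v\}$ onto one side either; the free/regular distinction (p-closedness) is what powers claim (2), not claim (1).
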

\begin{proof}
  Let ${\goth M} = \struct{S,\lines}$ freely contain $K_X = \struct{X,\sub_2(X)}$,
  $|X| = n$. Then $X \subset S$ and for each edge $e\in\sub_2(X)$ there is a third point
  $\infty_e\in\overline{e}\setminus e$ of $\goth M$ on $\overline{e}$.
  Therefore,
  $|S| \geq |X| + |\sub_2(X)| = \binom{n+1}{2}$.
  Write $X^\infty = \{ e_\infty\colon e\in\sub_2(X) \}$. The rank of a point $x\in X$
  in $\goth M$ is at least $n-1$, so the rank of a point $q \in X^\infty$ is at 
  least $n-1$ as well. 
  On the other hand, there passes exactly one side of 
  $K_X$ through a point in $X^\infty$.
  Let $b_0$ be the number of lines through a point in $X^\infty$
  distinct from the sides of $K_X$. This number is minimal when the lines in question
  contain entirely points in $X^\infty$ and then through $q\in X^\infty$ there pass: 
  one  side of $K_X$ and lines contained in $X^\infty$.
  Consequently, 
  $3 b_0 \geq |\sub_2(X)|(n-2)$. This yields $b_0 \geq \binom{n}{3}$.
  Finally, 
  $|\lines| \geq |\sub_2(X)| + b_0 = \binom{n+1}{3}$.
\end{proof}
\begin{prop}\label{pelny2horyzont}
  Let ${\goth M} = \struct{S,\lines}$ be a minimal \PSTS\ which freely contains a complete
  graph $K_X = \struct{X,{\sub_2(X)}}$ and $|X| = n$. 
  Then {\em the complement of $K_X$} i.e. the structure
  $\struct{S\setminus X,\lines\setminus\{ \overline{e}\colon e\in{\sub_2(X)} \}}$
  is a
  $\binkonf(n,0)$-configuration and a subspace of $\goth M$.
  \par
  Conversely, let ${\goth N} = \struct{Z,{\cal G}}$ be a
  $\binkonf(n,0)$-configuration
  regularly contained in $\goth M$.
  Then $S\setminus Z$ yields in $\goth M$ a complete $K_n$-graph
  freely contained in $\goth M$, 
  whose complement is $\goth N$.
\end{prop}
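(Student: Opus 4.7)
\emph{Proof plan.} My approach is to read off the structure of the minimal \PSTS\ from the extremal case of Proposition \ref{minSTS2pelny} and to invert that construction for the converse.

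For the forward direction, I would first observe that minimality forces every inequality in the proof of Proposition \ref{minSTS2pelny} to be an equality. Setting $X^\infty = \{\infty_e\colon e\in \sub_2(X)\}$, p-closedness makes the points $\infty_e$ pairwise distinct (any coincidence would give two distinct sides meeting at a common point outside $X$), and containment together with the PSTS property forces each $\infty_e$ to lie outside $X$ (otherwise two different edges of $K_X$ would be forced to share a common extension). Hence $|X^\infty| = \binom{n}{2}$ and the cardinality bound forces $S = X \cup X^\infty$. The rank $n-1$ at each $x\in X$ is consumed entirely by the $n-1$ sides incident with $x$, so no non-side line of $\goth M$ meets $X$; consequently each of the $n-2$ non-side lines through any $q\in X^\infty$ is contained in $X^\infty$. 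A tally then gives: removing $X$ and the $\binom{n}{2}$ sides leaves $\binom{n}{2}$ points, $\binom{n+1}{3}-\binom{n}{2}=\binom{n}{3}$ lines (by Pascal), three points per line, rank $n-2$ at each point --- exactly the $\binkonf(n,0)$ parameters. The result is a subspace of $\goth M$, since a line meeting $S\setminus X$ in two points cannot be a side (each side meets $S\setminus X$ in exactly one point) and so is a non-side line entirely inside $S\setminus X$.

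For the converse, let $\goth N = \struct{Z,{\cal G}}$ be a $\binkonf(n,0)$-configuration regularly contained in $\goth M$. Then $|S\setminus Z| = \binom{n+1}{2}-\binom{n}{2} = n$ and the number of \emph{outside lines} (lines of $\goth M$ not extending a line of $\cal G$) equals $\binom{n+1}{3}-\binom{n}{3}=\binom{n}{2}$. Regularity gives that each outside line meets $Z$ in at most one point; comparing the $\goth M$-rank $n-1$ with the $\goth N$-rank $n-2$ at a point of $Z$ shows that each point of $Z$ lies on exactly one outside line. Double counting then pins down every outside line to have exactly one point in $Z$ and two in $S\setminus Z$.

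Finally I would reconstruct $K_{S\setminus Z}$: each $x\in S\setminus Z$ lies on no line of $\goth N$, so all $n-1$ of its $\goth M$-lines are outside lines, each joining $x$ to one further point of $S\setminus Z$. Since $|S\setminus Z\setminus\{x\}|=n-1$ and PSTS prevents two outside lines through $x$ from sharing a partner, the partners exhaust $S\setminus Z\setminus\{x\}$; hence every edge of $K_{S\setminus Z}$ is realised by exactly one outside line. Two outside lines cannot meet at a point of $Z$ (each such point lies on only one outside line), so $K_{S\setminus Z}$ is freely contained in $\goth M$, and by construction its complement is $\goth N$. The only delicate step is the careful extraction of the equality cases in the forward direction (notably the disjointness $X\cap X^\infty=\emptyset$); the remainder is combinatorial accounting via Pascal's identity.
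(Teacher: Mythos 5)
Your proof is correct and follows essentially the same route as the paper: the forward direction is the equality-case analysis of the counting argument from Proposition \ref{minSTS2pelny} (which the paper simply cites rather than re-deriving), and the converse is the same rank-comparison and double-counting argument showing each outside line meets $Z$ in exactly one point and $S\setminus Z$ in exactly two. The extra care you take with the disjointness $X\cap X^\infty=\emptyset$ and the pairwise distinctness of the points $\infty_e$ is a welcome explicit filling-in of details the paper leaves implicit.
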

\begin{proof}
  By \ref{minSTS2pelny}, $\goth M$ is a $\binkonf(n,+1)$-configuration.
  The first statement was proved, in fact, in the proof of \ref{minSTS2pelny}.
  \par
  Let $\goth N$ be a suitable subconfiguration of $\goth M$. Set $X = S\setminus Z$.
  Then $|X| = n$. Through each point $z\in Z$ there passes exactly one line 
  $L_z \in \lines\setminus{\cal G}$. If $z_1\neq z_2$ then $L_{z_1}\neq L_{z_2}$,
  as otherwise $L_{z_1}\in{\cal G}$.
  We have 
  $|\lines\setminus{\cal G}| = \binom{n+1}{3} - \binom{n}{3}
  = \binom{n}{2} = |\{L_z \colon z\in Z \}|$ 
  and therefore 
  $\lines\setminus{\cal G} = \{ L_z\colon z \in Z \}$.
  Each line in $\lines\setminus{\cal G}$ contains exactly two elements of $X$;
  comparing the parameters we get that 
  $\struct{X,\lines\setminus{\cal G}}$ is a complete graph.
\end{proof}

As we learn from the proofs of \ref{minSTS2pelny} and \ref{pelny2horyzont}
each minimal partial {\sf STS} that freely contains a complete graph $K_n$ is associated 
with a labelling (the map $e\mapsto\infty_e$) of the points of a 
$\binkonf(n,0)$-configuration $\goth H$ by the elements of $\sub_2(X)$, $|X| = n$.
\par
Indeed, let ${\goth H} = \struct{Z,{\cal G}}$, $\mu\colon \sub_2(X)\longrightarrow Z$
be a bijection, and $n = |X|$. Then the configuration
$$
  K_X +_\mu {\goth H} := \struct{{X \cup Z},%
  {{\cal G} \cup \left\{ \{ a,b,\mu(\{ a,b \}) \} \colon \{a,b\}\in\sub_2(X) \right\}}}
$$
%
is a $\binkonf(n,+1)$-configuration freely containing $K_X$.

In what follows any bijection $\mu$ of the points of a configuration onto an arbitrary
set will be frequently named a {\em labelling}. 
\footnote{%
In \cite{klimczak} the term `improper points' was used instead of `labelling'. 
Now we think that this older term should not be used, as it suggests, incorrectly,
some connections with a `parallelism', something to do with `directions'.}

The above apparatus was fruitfully applied in \cite{klik:VC} to the case $n=4$
(labelling of the Veblen configuration) to classify $10_3$-configurations which freely 
contain $K_4$. 
Clearly, this method can be applied to arbitrary $n$,
though even in the next step: $n=5$ a classification ``by hand" of all the 
labellings of arbitrary $\konftyp(\binom{5}{2},3,\binom{5}{3},3)$-configuration 
by the elements of 
$\sub_2(X)$ with $|X|=5$ seems seriously much more complex, if executable.
In what follows we shall propose a way that may be applied to arbitrary $n$
and which seems (at least a bit) less involved.

\begin{cor}\label{cor:pelny2horyzont}
  Let $\goth M$ be a $\binkonf(n,+1)$-configuration.
  $\goth M$ freely contains a complete graph $K_n$ iff $\goth M$
  regularly contains a $\binkonf(n,0)$-subconfiguration.
\end{cor}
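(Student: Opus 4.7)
The plan is to observe that this corollary is essentially a direct repackaging of Proposition~\ref{pelny2horyzont}, with the only real point to verify being that the hypothesis ``$\goth M$ is a $\binkonf(n,+1)$-configuration'' can, in the relevant direction, play the role of ``$\goth M$ is a minimal \PSTS\ which freely contains $K_n$.'' By Proposition~\ref{minSTS2pelny}, the parameters $\binkonf(n,+1)$ are exactly those of a smallest \PSTS\ which freely contains $K_n$; hence any $\binkonf(n,+1)$-configuration that happens to freely contain $K_n$ is automatically of minimal size and thus satisfies the hypothesis of \ref{pelny2horyzont}.

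For the forward implication, I would assume that $\goth M$ freely contains some $K_X$ with $|X|=n$. By the remark just made, $\goth M$ qualifies as a minimal such \PSTS, so the first part of Proposition~\ref{pelny2horyzont} applies: the complement of $K_X$ in $\goth M$, namely the structure obtained by discarding the points of $X$ and the sides of $K_X$, is a $\binkonf(n,0)$-configuration which is a subspace of $\goth M$. Being a subspace is precisely the definition of regular containment, so this yields the required $\binkonf(n,0)$-subconfiguration.

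For the converse, I would assume that $\goth M$ regularly contains some ${\goth N}=\struct{Z,{\cal G}}$ with parameters $\binkonf(n,0)$. Here no minimality assumption is needed on $\goth M$ beyond having the parameters $\binkonf(n,+1)$, and the second half of Proposition~\ref{pelny2horyzont} applies verbatim: the set $S\setminus Z$ has cardinality $\binom{n+1}{2}-\binom{n}{2}=n$, the lines of $\lines\setminus{\cal G}$ each meet $S\setminus Z$ in two points, and a counting of $|\lines\setminus{\cal G}|=\binom{n+1}{3}-\binom{n}{3}=\binom{n}{2}$ shows that these lines realize all $2$-subsets of $S\setminus Z$ as sides, producing a freely contained $K_n$.

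I do not anticipate any serious obstacle: both directions are immediate consequences of results already proved, once one notes that the parameter condition $\binkonf(n,+1)$ together with the existence of a freely contained $K_n$ forces minimality. The only thing to be careful about is to quote \ref{pelny2horyzont} in the right direction each time and to point out why ``regularly contained'' coincides with ``$\goth N$ is a subspace of $\goth M$'' as used in \ref{pelny2horyzont}; this is just the definition given in the \emph{Definitions} section.
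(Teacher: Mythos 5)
Your proposal is correct and is exactly the derivation the paper intends: the corollary is stated without proof as an immediate consequence of Propositions~\ref{minSTS2pelny} and~\ref{pelny2horyzont}, and your bridging observation (that a $\binkonf(n,+1)$-configuration freely containing $K_n$ is automatically a minimal such \PSTS, while the converse half of \ref{pelny2horyzont} only uses the parameters) is precisely the point that makes the deduction legitimate. The identification of ``regularly contained'' with ``subspace'' is, as you say, just the definition.
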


\ifnoweb
\subsection{Intersection properties}
\fi

\begin{prop}\label{prop:cross-compl2}
  Any two distinct complete $K_n$-graphs freely contained in a \linebreak 
  $\binkonf(n,+1)$-configuration
  share exactly one vertex.
\end{prop}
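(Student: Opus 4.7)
Let $K_X$ and $K_Y$ be two distinct $K_n$-graphs freely contained in a $\binkonf(n,+1)$-configuration ${\goth M}=\struct{S,\lines}$; set $W:=X\cap Y$ and $k:=|W|$. The plan is to show $k=1$ by a double count of the lines of $\goth M$ which are simultaneously sides of $K_X$ and of $K_Y$ (call these \emph{common sides}).

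The key preliminary is a saturation observation. Because the point-rank of $\goth M$ is $n-1$ and $K_X$ already produces $n-1$ pairwise distinct sides through each of its $n$ vertices, every line of $\goth M$ through a point of $X$ is in fact a $K_X$-side. Moreover $|S\setminus X|=\binom{n+1}{2}-n=\binom{n}{2}$ equals the number of sides of $K_X$, and free containment makes the map ``side of $K_X \mapsto$ its third point'' injective, hence bijective; so every point of $S\setminus X$ lies on exactly one $K_X$-side. The analogous statements hold for $K_Y$. Consequently (i) every line through $z\in W$ is a common side, and (ii) for each $x\in X\setminus Y\subset S\setminus Y$ exactly one of the $n-1$ lines through $x$ is a $K_Y$-side and hence a common side.

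A common side $L$ satisfies $|L\cap X|=|L\cap Y|=2$ while $|L|=3$, so $L$ necessarily meets $W$, and two types arise: \emph{Type~I}, where $|L\cap W|=2$ and the third point lies in $S\setminus(X\cup Y)$; and \emph{Type~II}, where $L$ meets $W$, $X\setminus Y$ and $Y\setminus X$ in one point each. The Type~I sides are exactly the extensions of the $\binom{k}{2}$ edges internal to $W$, because for distinct $w,w'\in W$ the $K_X$- and $K_Y$-extensions of $\{w,w'\}$ both coincide with the unique line of $\goth M$ through $w$ and $w'$. Now count Type~II sides two ways: through $W$, each $z\in W$ lies on $k-1$ Type~I sides and therefore on $(n-1)-(k-1)=n-k$ Type~II sides, giving a total of $k(n-k)$; through $X\setminus Y$, observation (ii) produces exactly one common side through each $x$, necessarily Type~II (Type~I avoids $X\setminus Y$), for a total of $n-k$. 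Equating yields $(k-1)(n-k)=0$, so $k\in\{1,n\}$; the case $k=n$ forces $X=Y$, contradicting $K_X\neq K_Y$, and $k=0$ is ruled out by (ii), which demands at least $n\geq 2$ common sides where none can exist. The main technical hurdle is the Type~I/II bookkeeping — verifying that precisely the $\binom{k}{2}$ internal edges of $W$ account for the Type~I common sides — after which the two counts line up immediately.
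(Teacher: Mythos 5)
Your proof is correct, but it follows a genuinely different route from the paper's. The paper splits the statement in two: non-emptiness of $X\cap Y$ comes by contradiction from Propositions \ref{pelny2horyzont} and \ref{minSTS2pelny} (a $K_n$ disjoint from $K_X$ would be freely contained in the complement of $K_X$, which is a $\binkonf(n,0)$-configuration and hence too small to contain $K_n$), while ``at most one common vertex'' is a short local argument: if $a,b\in X\cap Y$, then for any $x\in X\setminus\{a,b\}$ the lines $\overline{\{a,x\}}$ and $\overline{\{x,b\}}$ are both sides of $K_Y$ meeting at $x$, so p-closedness forces $x\in Y$ and hence $X=Y$. You instead run a single self-contained double count of the common sides, classified by how they meet $W=X\cap Y$. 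Your saturation observations (every line through a vertex of $X$ is a $K_X$-side because the point rank is $n-1$; every point off $X$ lies on exactly one $K_X$-side because the side-to-third-point map is injective between sets of equal size $\binom{n}{2}$) are sound consequences of the parameters and of p-closedness, your Type~I/Type~II classification is exhaustive, and the identity $k(n-k)=n-k$ together with the exclusion of $k=n$ and $k=0$ does yield $k=1$. Your route costs more bookkeeping but is independent of the two earlier propositions, and it buys something the paper only records later in Corollary \ref{cor:cross-compl-sides}: substituting $k=1$ into your count shows at once that the two graphs share exactly $n-1$ sides. The paper's argument is shorter and reuses machinery already established; both are valid.
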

\begin{proof}
  Let $G_1 = \struct{X_1,\sub_2(X_1)}$, $G_2 = \struct{X_2,\sub_2(X_2)}$ be two $K_n$
  graphs freely contained in a $\binkonf(n,+1)$-configuration ${\goth M}$.

  First, we note that $X_1\cap X_2\neq\emptyset$.
  Indeed, suppose that $X_1\cap X_2 = \emptyset$. 
  Then $X_2$ is freely contained in the complement of $X_1$.
  From \ref{pelny2horyzont}, this complement is a $\binkonf(n,0)$-configuration,
  which contradicts \ref{minSTS2pelny}.

  Let $a\in X_1\cap X_2$. Since 
  the degree of $a$ in $K_{X_1}$, in $\goth M$, and in $K_{X_2}$ is $n-1$, 
  $G_1$ and $G_2$ have common sides through $a$.
  Assume that $b \in X_1\cap X_2$ for some $b \neq a$; as previously the sides of $G_1$
  and of $G_2$ through $b$ coincide. So, consider arbitrary $x\in X_1\setminus\{ a,b\}$.
  $G_1$ and $G_2$ both contain the sides $\overline{\{ a,x \}}$ and $\overline{\{ x,b \}}$,
  so $x \in X_2$. Finally, we arrive to $X_1 = X_2$.
\end{proof}
For a geometer the situation considered in \ref{prop:cross-compl2} has clear geometrical
meaning: if $a$ is the common vertex of two complete $K_n$ graphs 
$\struct{X_1,{\cal E}_1}$, $\struct{X_2,{\cal E}_2}$ 
freely contained in a $\binkonf(n,+1)$-configuration $\goth M$ then $a$ is the 
{\em perspective center} of 
two $K_{n-1}$-simplices 
$A_1 = X_1\setminus\{a\}$ and $A_2 = X_2\setminus\{a\}$.
This means: there is a bijective correspondence $\sigma_a$ between the vertices of
the simplices such that for every vertex $x$ of the first simplex the corresponding
vertex $\sigma_a(x)$ of the latter simplex lies on the line $\overline{a,x}$
through $a$ and $x$.
As we shall see, in this case also an analogue of a {\em perspective axis} can be found.
That is, there is a subspace $Z$ of $\goth M$ and a bijective correspondence $\zeta$
between the sides of the simplices $A_1$ and $A_2$
such that for each side $L$ of the first simplex the corresponding side
$\zeta(L)$ of the latter simplex crosses $L$ in a point on $Z$.
\begin{prop}\label{prop:cross-compl3}
  Let $G_i= \struct{X_i,\sub_2(X_i)}$, $i=1,2$ be two complete $K_n$-graphs
  freely contained in a $\binkonf(n,+1)$-configuration ${\goth M}=\struct{S,\lines}$,
  let $p\in X_1\cap X_2$, and ${\goth N}_i$ with the pointset $Z_i = S\setminus X_{3-i}$
  be the complement of $G_{3-i}$ in $\goth M$ for $i=1,2$ (cf. \ref{pelny2horyzont}).
  \begin{sentences}\itemsep-2pt
  \item\label{cros-compl3:1}
    ${\goth N}_i$ freely contains a complete $K_{n-1}$-graph, for each $i=1,2$.
  \item\label{cros-compl3:2}
    Each side of $G_i$ missing $p$ crosses exactly one side of $G_{3-i}$ and
    the latter misses $p$ as well.
    The intersection points of the corresponding sides form the set $Z_1\cap Z_2$.
  \end{sentences}
\end{prop}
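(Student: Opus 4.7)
The plan is to exploit a rank identity: in a $\binkonf(n,+1)$-configuration the point-rank equals $n-1$, which is exactly the degree of each vertex of $K_n$. Consequently, every line of $\goth M$ passing through a vertex $v\in X_i$ is forced to be a side of $G_i$. Combined with Proposition~\ref{prop:cross-compl2}, which forces $X_1\cap X_2=\{p\}$, this rigidly controls how the two complete graphs interact inside $\goth M$, and the rest of the argument is a sequence of applications of this principle.

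For part~(\ref{cros-compl3:1}) I would set $Y_i:=X_i\setminus\{p\}\subset Z_i$ and consider the induced $K_{n-1}$ on $Y_i$, whose sides are certain sides of $G_i$. The key claim is that each such side $\overline{\{a,b\}}=\{a,b,q\}$ has its third point $q$ outside $X_{3-i}$: otherwise the rank identity would make every line through $q$ a side of $G_{3-i}$, yet $\overline{\{a,b\}}$ contains only one point of $X_{3-i}$, not the two required of a $G_{3-i}$-side. Hence this $K_{n-1}$ sits entirely inside $\goth N_i$, and its p-closure there is inherited from the p-closure of $G_i$ in $\goth M$: any common point of two of its sides lies in $X_i\cap Z_i=Y_i$.

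For part~(\ref{cros-compl3:2}), fix a side $L=\{a,b,q\}$ of $G_1$ missing $p$. The argument just given already places $q\in Z_1\cap Z_2$. Any side of $G_2$ meeting $L$ must do so at $a$, $b$, or $q$. Intersection at $a$ (or $b$) is impossible: by the rank identity, a side of $G_2$ through $a\in X_1$ would simultaneously be a side of $G_1$, hence would carry two points of each of $X_1$ and $X_2$, contradicting $|X_1\cap X_2|=1$. Therefore the intersection occurs at $q$. Since $\goth N_1$ is a $\binkonf(n,0)$-configuration of rank $n-2$ and $q\in Z_1$, exactly one of the $n-1$ lines of $\goth M$ through $q$ fails to lie in $\goth N_1$; equivalently, exactly one side of $G_2$ passes through $q$. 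Call it $L'$. It misses $p$, because otherwise $L'$ would be a side of $G_2$, and hence of $G_1$, through $p$, while $L'\ni q\notin X_1$.

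The correspondence $L\mapsto L'$ is thus a well-defined map from sides of $G_1$ missing $p$ to sides of $G_2$ missing $p$, injective because $G_1$ is p-closed (distinct sides have distinct third points), and bijective by symmetry and cardinality. The intersection points exhaust $Z_1\cap Z_2$: a count gives $|Z_1\cap Z_2|=\binom{n+1}{2}-(2n-1)=\binom{n-1}{2}$, which equals the number of sides of $G_1$ missing $p$. The main obstacle to executing this plan is the bookkeeping of which lines of $\goth M$ are sides of which graph; once the rank-forcing principle is cleanly isolated, each of the claims above reduces to applying it at the right point, with the delicate step being the exclusion of intersections at vertices in case~(\ref{cros-compl3:2}).
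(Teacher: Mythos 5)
Your rank-forcing principle is the right engine, and your part~\eqref{cros-compl3:1} is correct: the argument that the third point $q$ of a side $\overline{\{a,b\}}$ with $a,b\in X_i\setminus\{p\}$ cannot lie in $X_{3-i}$ (else that line would have to carry two points of $X_{3-i}$) is a legitimate alternative to the paper's, which instead notes that $X_{3-i}\subset\bigcup\{\overline{p,x}\colon x\in X_i\}$ and invokes the p-closedness of $G_i$.

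There is, however, a genuine error in part~\eqref{cros-compl3:2}, at the step excluding intersections at $a$ and $b$. You claim that a side of $G_2$ through $a\in X_1$ would also be a side of $G_1$ and hence ``would carry two points of each of $X_1$ and $X_2$, contradicting $|X_1\cap X_2|=1$.'' That is not a contradiction: on a three-point line, two points of $X_1$ and two points of $X_2$ merely force the line to pass through the unique common point $p$. Such lines exist --- by your own rank identity every one of the $n-1$ lines of $\goth M$ through $p$ is simultaneously a side of $G_1$ and of $G_2$ --- and the one through $p$ and $a$ does meet $L$ at $a$. So, read literally, $L$ meets three sides of $G_2$: the common sides through $p$ and $a$, through $p$ and $b$, and the side through $q$. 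The proposition must be read (as the preceding discussion of perspective axes indicates, and as the paper's own terse proof tacitly does) as a correspondence between the sides of the two simplices $X_1\setminus\{p\}$ and $X_2\setminus\{p\}$, i.e.\ the count ``exactly one'' ranges over sides of $G_{3-i}$ \emph{missing} $p$. The repair of your step is precisely the observation above: any side of $G_2$ meeting $L$ at $a$ or $b$ necessarily contains $p$ and so drops out of that count, leaving only the unique side of $G_2$ through $q$ (your rank count in ${\goth N}_1$ for its uniqueness is correct). Relatedly, your reason why that side misses $p$ is incomplete: ``$L'\ni q\notin X_1$'' does not by itself contradict $L'$ being a side of $G_1$, since the third point of a side of $G_1$ is never in $X_1$; you need that a line which is a side of both $G_1$ and $G_2$ through $p$ has all three of its points in $X_1\cup X_2$, whereas $q$ lies outside that union. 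With these two repairs the rest of your argument, including the count $|Z_1\cap Z_2|=\binom{n-1}{2}$, goes through and matches the paper's.
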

\begin{proof}
  It seen that 
  $X_{i}\setminus\{ p \}$ is a $K_{n-1}$-graph contained in $Z_{i}$ 
  and, clearly, it is p-closed. This proves \eqref{cros-compl3:1}.
  \par
  Let $p\notin e\in\sub_2(X_i)$, 
  then $\overline{e}\setminus e$ is a point $u_e$ in $S$.
  Since $u_e\notin X_i$, we have $u_e\in Z_{3-i}$.
  Moreover, $u_e\notin X_{3-i}$, since 
  $X_{3-i} \subset \bigcup\{ \overline{p,x}\colon x\in X_i \}$,
  so $u_e\in Z_i$.
  Therefore, there is a side $e'$
  of $G_{3-i}$ (a line of ${\goth N}_{3-i}$)
  which passes through $u_e$. 
  Statement \eqref{cros-compl3:2} is now evident.
\end{proof}
Even in the smallest possible case ($n = 4$) we have a 
$\binkonf(n,1)$-configuration (the fez configuration, cf. \cite{klik:VC}) 
which contains a pair of
perspective triangles with the perspective center $p$ such that the correspondence of 
the form $\overline{ \{ a,b\} } \longmapsto \overline{ \{ \sigma_p(a),\sigma_p(b) \} }$
does not yield any perspective axis, but the triangles in question do have a perspective axis.

\ifnoweb\relax\else
\subsection{\BSTS's with a few complete graphs}

Let us say a few comments on binomial configurations which contain `a few' complete graphs.
First, as an immediate consequence of 
\ref{prop:cross-compl2} and \ref{prop:cross-compl3}\eqref{cros-compl3:1} we can construct
the following
\begin{exm}\label{exm:oneK2noK}
  {\em
  Let ${\goth N}$ be a $\binkonf(n,0)$-configuration 
  which does not freely contain any $K_{n-1}$-graph. Let $|X|=n$ and 
  $\mu\colon \sub_2(X) \longrightarrow \{ \text{the points of }{\goth N} \}$ 
  be a bijection.
  Then $K_X +_\mu {\goth N}$ is a $\binkonf(n,+1)$-configuration which freely contains
  exactly one $K_n$ graph.}
  \par\noindent
  {\sc Indeed}, if $Y$ was another $K_n$-subgraph of $K_X +_\mu {\goth N}$ 
  then $Y$ would yield a $K_{n-1}$ graph in the 
  complement of $K_X$ i.e. in $\goth N$.
  Note that there do exist $\konftyp(10,3,10,3) = \binkonf(5,0)$-configurations without
  $K_4$-subgraphs (cf. \cite{betten}).
\myend
\end{exm}
There are lots of quite tricky contructions which produce a binomial configuration
with prescribed small (=0,1,2,3) number of its frreely contained maximal complete
subgraphs, but we pass over them. 
Simply because the uniform inductive procedure
which produces such a configuration with arbitrary admissible number of complete
subgraphs exists; the procedure will be shown in \ref{thm:allKn}. 
However, these constructions, mostly involving suitable defined labelling,
enable us to say some more about the structure of the configuration we
produce, e.g. about configurrations complementary to complete subgraphs.
Let us quote one of such contructions.
%

Note that if a $\binkonf(n,+1)$-configuration $\goth M$ freely contains two $K_n$-graphs
then the two arising complementary $\binkonf(n,0)$-configurations need not be isomorphic.
\begin{exm}\label{exm:dwaKn}
  Let ${\goth N}= \struct{Z,{\cal G}}$ be a $\binkonf(n,-1)$-configuration, so 
  $|Z|=\binom{n-1}{2}$. Let $X$ be an arbitrary $(n-1)$-element set, and let 
  $\mu_1,\mu_2\colon \sub_2(X)\longrightarrow Z$ be two labellings.
  Finally, let $p\notin Z \cup (X \times \{ 1,2 \})$,
  $S = X\times\{ 1,2\} \cup Z \cup \{p\}$. Consider the following system $\lines$ of blocks:
  \begin{eqnarray*}
    \lines & = & {\cal G} \\
    \strut & \cup & \left\{ \{ p,(x,1),(x,2) \}\colon x \in X \right\} \\
    \strut & \cup & \left\{ \{ (x,1),(y,1),\mu_1(\{ x,y \})) \} \colon \{x,y\}\in\sub_2(X) \right\} \\
    \strut & \cup & \left\{ \{ (x,2),(y,2),\mu_2(\{ x,y \})) \} \colon \{x,y\}\in\sub_2(X) \right\}.
  \end{eqnarray*}
  Then 
  ${\goth M} := \struct{S,\lines}$ is a $\binkonf(n,+1)$-configuration. $\goth M$ freely 
  contains two $K_n$-graphs 
  $K_{Z_1}$, $K_{Z_2}$, $Z_i = \{p\} \cup (X\times\{i\})$, $i=1,2$.
  On the other hand, the subconfiguration of $\goth M$ complementary to $K_{Z_i}$ is,
  evidently, isomorphic to $K_X +_{\mu_{3-i}} {\goth N}$.

  \par
  If $n\geq 5$  
  then 
  there are labellings $\mu_1,\mu_2$ of $\sub_2(X)$, $|X|=n-1$ such that 
  $K_X +_{\mu_1}{\goth N} \not\cong K_X +_{\mu_2} {\goth N}$. 
  For $n=5$ the required labellings are listed in \cite{klik:VC}.
\myend
\end{exm}
\fi 

\begin{prop}\label{prop:cross-compl-line}
  Let $\struct{X_i,\sub_2(X_i)}$, $i=1,2,3$ be three distinct $K_n$ graphs 
  freely contained in a $\binkonf(n,+1)$-configuration $\goth M$. Let 
  $c_k\in X_i\cap X_j$ for all $\{k,i,j\} = \{1,2,3\}$.
  Then $\{c_1,c_2,c_3\}$ is a line of $\goth M$.
\end{prop}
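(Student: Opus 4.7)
The plan is to exploit a saturation property special to the $\binkonf(n,+1)$ setting. By~\ref{minSTS2pelny} every point of $\goth M$ lies on exactly $n-1$ lines, while every vertex $c$ of a freely contained $K_n$-graph $K_X$ is already incident with $n-1$ distinct sides of $K_X$ (one per edge of $K_X$ at $c$). Hence \emph{every} line of $\goth M$ through $c$ must already be a side of $K_X$.

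Before using this I would dispose of degenerate labellings. By~\ref{prop:cross-compl2} each intersection $X_i\cap X_j$ is a singleton, so either the chosen points are pairwise distinct or they all collapse to the unique element of $X_1\cap X_2\cap X_3$; the latter case makes the conclusion vacuous, so I assume $c_1,c_2,c_3$ are pairwise distinct. In particular $c_k\notin X_k$ for each $k$, since otherwise $c_k\in X_k\cap X_i=\{c_j\}$ would force two labels to coincide.

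The main step is to inspect the line $L:=\overline{\{c_2,c_3\}}$, which is a side of $K_{X_1}$ because $c_2,c_3\in X_1$. Applying the saturation observation at $c_2\in X_3$ shows that $L$ is simultaneously a side of $K_{X_3}$; it therefore carries exactly two vertices of $X_3$, one of which is $c_2$ and the other — since $c_3\notin X_3$ — is the third point $q$ of $L$, so $q\in X_3$. The symmetric argument at $c_3\in X_2$ makes $L$ a side of $K_{X_2}$, whose two $X_2$-vertices are $c_3$ and (since $c_2\notin X_2$) again $q$, hence $q\in X_2$. Therefore $q\in X_2\cap X_3=\{c_1\}$, giving $q=c_1$ and $L=\{c_1,c_2,c_3\}$, as desired.

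I do not anticipate any real obstacle: once the saturation principle is noted, the rest is a short double application of~\ref{prop:cross-compl2} together with the distinctness bookkeeping. The only mild point of care is confirming $c_k\notin X_k$, which, as above, is immediate from the singleton property of pairwise intersections.
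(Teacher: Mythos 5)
Your proof is correct and rests on the same two pillars as the paper's own argument: the saturation observation that a shared vertex has rank $n-1$, so every line through it is a side of each graph containing it, combined with the singleton intersections from Proposition~\ref{prop:cross-compl2}. The paper traces the side $\overline{c_1,c_3}$ of $G_1$ while you trace $\overline{c_2,c_3}$, which is only a cosmetic difference; your explicit treatment of the degenerate case $c_1=c_2=c_3$ is a small point the paper leaves implicit.
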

\begin{proof}
  We have $c_3 \in X_1 \cap X_2$, so $X_2\setminus \{c_3\}$ consists of all the 
  `third points' on sides of $X_1$ through $c_3$ i.e. 
  \begin{ctext}
    $X_2\setminus \{c_3\} = \{ \overline{c_3,x}\setminus\{ c_3,x \}\colon 
    x\in X_1\setminus \{ c_3\} \}$.
  \end{ctext}
  From the assumption, $c_1\in X_2$ and thus $\{ c_1,c_3,u \}$ is a line of $\goth M$
  for some $u \in X_1\setminus\{ c_3 \}$.
  With analogous reasoning we have
  \begin{ctext}
    $X_3\setminus \{c_1\} = \{ \overline{c_1,x}\setminus\{ c_1,x \}\colon 
    x\in X_2\setminus \{ c_1\} \}$,
  \end{ctext}
  so $u\in X_3$. Finally, with \ref{prop:cross-compl2} we have $u = c_2$: the claim.
\end{proof}
\begin{cor}\label{cor:cross-compl-sides}
  Let ${\goth M} = \struct{S,\lines}$ be a $\binkonf(n,+1)$-configuration.
  Let \linebreak
  $X_1,X_2,X_3\in\sub_n(S)$ be pairwise distinct, ${\cal E}_i = \sub_2(X_i)$ and 
  $G_i = \struct{X_i,{\cal E}_i}$ for $i=1,2,3$.
  \begin{sentences}\itemsep-2pt
  \item
    Assume that $G_1$ and $G_2$ are freely contained in $\goth M$.
    Then $G_1,G_2$ have exactly $n-1$ common sides i.e.
    \begin{ctext}
      $|\left\{ \overline{e}\colon e\in{\cal E}_1 \right\} \cap 
      \left\{ \overline{e}\colon e\in{\cal E}_2 \right\}| = n-1$.
    \end{ctext}
  \item
    Assume that $G_1,G_2,G_3$ are freely contained in $\goth M$.
    Then there is exactly one side common to $G_1$, $G_2$, and $G_3$.
  \end{sentences}
\end{cor}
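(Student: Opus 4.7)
The plan is to exploit rank-counting at the shared vertex delivered by Prop.\ \ref{prop:cross-compl2}, and then piggyback on Prop.\ \ref{prop:cross-compl-line} for the triple intersection.

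For statement (i), I would first invoke Prop.\ \ref{prop:cross-compl2} to fix the unique common vertex $p\in X_1\cap X_2$. The key observation is that in a $\binkonf(n,+1)$-configuration every point has rank exactly $n-1$, while in each $G_i$ the vertex $p$ already has degree $n-1$. Since $G_i$ is freely contained, the $n-1$ edges $\{p,x\}$ with $x\in X_i\setminus\{p\}$ extend to $n-1$ \emph{distinct} lines of $\goth M$; these therefore exhaust the pencil of lines at $p$. Consequently the pencil at $p$ consists \emph{simultaneously} of sides of $G_1$ and of $G_2$, giving at least $n-1$ common sides. To rule out further common sides, I would take any common side $L=\overline{e_1}=\overline{e_2}$ with $e_i\in\sub_2(X_i)$ and note that $e_1\cup e_2\subseteq L$, a $3$-point set. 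The options $|e_1\cap e_2|=0$ is impossible (would give four points on $L$), and $|e_1\cap e_2|\geq 1$ forces the common vertex into $X_1\cap X_2=\{p\}$, so $p\in L$. Hence every common side passes through $p$, giving exactly $n-1$.

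For statement (ii), I would simply apply (i) to each pair: the $n-1$ sides common to $G_i$ and $G_j$ all pass through the unique vertex $c_k\in X_i\cap X_j$ (where $\{i,j,k\}=\{1,2,3\}$). By Prop.\ \ref{prop:cross-compl-line}, $\{c_1,c_2,c_3\}$ is a line of $\goth M$. Since $c_2,c_3\in X_1$ this line is $\overline{\{c_2,c_3\}}$, hence a side of $G_1$; by symmetry it is also a side of $G_2$ and of $G_3$. For uniqueness, any side common to all three of $G_1,G_2,G_3$ must, by (i) applied to each pair, contain each of $c_1,c_2,c_3$, and a three-point line then forces it to equal $\{c_1,c_2,c_3\}$.

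There is no genuine obstacle here; the only subtlety worth flagging is the pigeonhole step at $p$, which depends on the precise matching of the rank $n-1$ of a $\binkonf(n,+1)$-configuration with the degree of $p$ in the freely contained $K_n$. Everything else is bookkeeping on the three-point lines together with the unique-common-vertex property of Prop.\ \ref{prop:cross-compl2} and the collinearity supplied by Prop.\ \ref{prop:cross-compl-line}.
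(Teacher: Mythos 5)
Your proof is correct and takes the route the paper intends: the statement is left as an unproved corollary of Propositions \ref{prop:cross-compl2} and \ref{prop:cross-compl-line}, and your key pigeonhole step is exactly the observation already made inside the proof of Proposition \ref{prop:cross-compl2} (the degree of the common vertex in $K_{X_1}$, in $\goth M$, and in $K_{X_2}$ all equal $n-1$, so the pencil at that vertex consists of common sides). The remaining bookkeeping --- every common side must meet $X_1\cap X_2=\{p\}$ because a $3$-point line cannot carry two disjoint edges, and a side common to all three graphs must contain the three distinct points $c_1,c_2,c_3$ of the line supplied by Proposition \ref{prop:cross-compl-line} --- is sound.
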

%

\subsection{The structure of complete subgraphs}

Next, we pass to an analysis of possible `many' subgraphs freely contained in 
a binomial configuration.
\begin{prop}\label{prop:maxK}
  Let ${G}_i = \struct{X_i,\sub_2(X_i)}$, $i=1,\ldots,m$ be a family of 
  $m$ distinct $K_n$-graphs
  freely contained in a $\binkonf(n,+1)$-configuration ${\goth M} = \struct{S,\lines}$.
  \begin{sentences}\itemsep-2pt
  \item\label{prop:maxK:1}
    Set $I = \{ 1,\ldots,m \}$.
    The map $q\colon \sub_2(I)\longrightarrow S$ determined (cf. \ref{prop:cross-compl2}) 
    by the condition
    \begin{ctext}
      $q^{i,j} = q(\{i,j\}) \in X_i \cap X_j$ for each $\{i,j \}\in\sub_2(I)$
    \end{ctext}
    embeds $\GrasSpace(I,2)$ into $\goth M$.
  \item\label{prop:maxK:2}
    Consequently, $m \leq n+1$.
  \item\label{prop:maxK:3}
    If $m=n$ then $\goth M$ freely contains one more, $(n+1)$-st $K_n$-graph. 
  \end{sentences}
\end{prop}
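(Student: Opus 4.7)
The plan is to handle the three parts of the statement in order. For \eqref{prop:maxK:1} I will verify that $q$ is well-defined, injective, sends Grassmannian lines to collinear triples, and that distinct Grassmannian lines extend to distinct lines of $\goth M$. Well-definedness is immediate from \ref{prop:cross-compl2}, which gives $|X_i \cap X_j| = 1$. For injectivity, suppose $q^{i,j} = q^{k,l} =: p$ with $\{i,j\} \neq \{k,l\}$; then $\{i,j\} \cup \{k,l\}$ contains at least three indices $s$ with $p \in X_s$. Applying \ref{prop:cross-compl2} to the three corresponding pairs forces the pairwise intersections of these three $K_n$-graphs to be exactly $\{p\}$, so the sets $X_s \setminus \{p\}$ are pairwise disjoint $(n-1)$-subsets of the set of points of $\goth M$ lying on lines through $p$ other than $p$ itself. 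Since $\goth M$ has point-rank $n-1$ and $3$-element lines, this ambient set has only $2(n-1)$ points, yielding $3(n-1) \leq 2(n-1)$---impossible for $n \geq 2$. The collinearity of $q^{i,j}, q^{i,k}, q^{j,k}$ is exactly \ref{prop:cross-compl-line}, and distinctness of extended lines then follows from injectivity of $q$ together with the fact that in a \PSTS\ a line is determined by its three points. Part \eqref{prop:maxK:2} is then immediate: $q$ embeds $\binom{m}{2}$ distinct points into $S$, and $|S| = \binom{n+1}{2}$, so $m \leq n+1$.

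For \eqref{prop:maxK:3}, I assume $m = n$. Each graph $X_i$ contains the $n-1$ intersection points $q^{i,j}$ ($j\neq i$) together with exactly one further vertex $r_i$. The $r_i$'s are pairwise distinct and distinct from every $q^{a,b}$---any coincidence would enlarge some $|X_s \cap X_t|$ beyond $1$---so the $\binom{n}{2} + n = \binom{n+1}{2}$ points $\{q^{i,j}\} \cup \{r_i\}$ exhaust $S$. The decisive step is to count lines through $q^{i,k}$: there are $n-1$ of them in $\goth M$, of which $n-2$ are the Grassmannian lines $\{q^{i,k}, q^{i,j}, q^{k,j}\}$ supplied by \eqref{prop:maxK:1}, so exactly one further line remains. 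That line must simultaneously be the last side of $X_i$ through $q^{i,k}$, namely $\overline{\{q^{i,k}, r_i\}}$, and the last side of $X_k$, namely $\overline{\{q^{i,k}, r_k\}}$; hence $\{r_i, r_k, q^{i,k}\} \in \lines$ for every $i \neq k$. Thus $R := \{r_1, \ldots, r_n\}$ is pairwise collinear, the sides $\overline{\{r_i, r_k\}}$ are pairwise distinct (their third points $q^{i,k}$ differ), and two such sides with disjoint $r$-pairs are actually disjoint in $\goth M$ (an overlap would force either some $r_s$ to equal some $q^{\alpha,\beta}$, or the two third points $q^{i,k}, q^{i',k'}$ to coincide, neither of which holds). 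Hence $K_R$ is freely contained in $\goth M$, and $R \neq X_i$ for every $i$ because $R$ contains $r_j$ with $j \neq i$ while $r_j \in X_j$ is not in $X_i$.

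The main obstacle is the forced identification in \eqref{prop:maxK:3} of the unique non-Grassmannian line through each $q^{i,k}$: counting lines through $q^{i,k}$ in two ways---via the embedded $\GrasSpace(n,2)$ and via each of the two graphs $X_i, X_k$ whose stars at $q^{i,k}$ must close up---pins down the third point as both $r_i$ and $r_k$, yielding the `axis' graph $R$.
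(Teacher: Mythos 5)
Your proof is correct and follows essentially the same route as the paper: injectivity of $q$ plus Proposition~\ref{prop:cross-compl-line} for part \eqref{prop:maxK:1}, and for part \eqref{prop:maxK:3} the identification of the unique non-intersection vertex $r_i$ of each $X_i$ together with a rank count at $q^{i,k}$ forcing the lines $\{r_i,r_k,q^{i,k}\}$. The only cosmetic difference is in the injectivity step, where the paper deduces $X_i\cap X_j\cap X_k=\emptyset$ directly from \ref{prop:cross-compl-line} (three coincident intersection points cannot form a line), while you use an equivalent degree count at the common point $p$.
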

\begin{proof}
  Ad \eqref{prop:maxK:1}:\quad 
  From \ref{prop:cross-compl-line}, $X_i\cap X_j \cap X_k = \emptyset$ 
  for distinct $i,j,k$ in $I$ and thus the map $q$ is an injection.
  Moreover, \ref{prop:cross-compl-line} also yields that $q$ maps each line of 
  $\GrasSpace(I,2)$ onto a line of $\goth M$.
\par\noindent
  \eqref{prop:maxK:2} is immediate now.
\par\noindent
  Ad \eqref{prop:maxK:3}:\quad
  For each $i\in I$ there is exactly one point $d_i\in X_i\setminus\bigcup_{j\neq i}X_j$.
  Write $X_0 = \{ d_i\colon i\in I\}$, clearly, $|X_0| = n$.
  Let $i,j\in I$ be distinct; from definition, $q^{i,j}\neq d_i,d_j$.
  From  \ref{prop:cross-compl-line} 
  we get that
  for every $k\in I$, $k\neq i,j$ the side $\overline{q^{i,j},q^{i,k}}$ of $G_i$ crosses
  $X_j$ in the point $q^{j,k}$. So, the side $\overline{q^{i,j},d_i}$ of $G_i$ 
  crosses $X_j$ in a point distinct from all the $q^{j,k}$ i.e. in the point $d_j$.
  Thus $X_0$ is a complete graph with the sides
  $\{ d_i,d_j,q^{i,j} \}$, $\{ i,j \}\in\sub_2(I)$.
  It is seen that $X_0$ is freely contained in $\goth M$.
\end{proof}
\begin{cor}\label{cor:maxKgras}
  A $\binkonf(n,+1)$-configuration $\goth M$ freely contains $n+1$ $K_n$-graphs
  iff ${\goth M}\cong\GrasSpace(n+1,2)$.
\end{cor}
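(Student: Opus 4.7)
The plan is to prove the two directions by leveraging Proposition~\ref{prop:maxK} together with a counting argument based on the fixed parameters of a $\binkonf(n,+1)$-configuration.

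For the easier direction ($\Leftarrow$), I would work inside $\GrasSpace(I,2)$ with $|I|=n+1$ and exhibit $n+1$ freely contained $K_n$-subgraphs as the ``stars'' of the Grassmannian. Namely, for each $i\in I$ set $X_i^\star = \{e\in\sub_2(I)\colon i\in e\}$; this has cardinality $n$, and any two of its points $\{i,j\},\{i,k\}$ lie on the line $\{i,j,k\}$, whose third point is $\{j,k\}\notin X_i^\star$. A routine verification shows that $K_{X_i^\star}$ is freely contained (distinct sides extend to distinct lines, since two lines $\{i,j,k\}$ and $\{i,l,m\}$ of $\GrasSpace(I,2)$ with $\{j,k\}\neq\{l,m\}$ meet only in vertices of the star, if at all). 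Thus $\GrasSpace(n+1,2)$ freely contains at least $n+1$ $K_n$-graphs, and by \ref{prop:maxK}\eqref{prop:maxK:2} this is also the upper bound.

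For the main direction ($\Rightarrow$), assume $\goth M=\struct{S,\lines}$ freely contains $n+1$ distinct $K_n$-graphs $G_1,\dots,G_{n+1}$. Apply \ref{prop:maxK}\eqref{prop:maxK:1} with $I=\{1,\dots,n+1\}$: the map $q\colon\sub_2(I)\longrightarrow S$ is an injection which sends each line of $\GrasSpace(I,2)$ onto a line of $\goth M$. Now invoke the parameters: a $\binkonf(n,+1)$-configuration has exactly $\binom{n+1}{2}$ points and $\binom{n+1}{3}$ lines, which are precisely the numbers of points and lines of $\GrasSpace(I,2)$. Hence $q$ is a bijection on points. Since two distinct $3$-subsets of $I$ differ in some point and $q$ is injective on points, $q$ sends distinct lines of $\GrasSpace(I,2)$ to distinct lines of $\goth M$; comparing cardinalities $\binom{n+1}{3}=|\lines|$ forces $q$ to be surjective on lines as well. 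This yields the desired isomorphism $\GrasSpace(n+1,2)\cong\goth M$.

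The work has essentially been done in \ref{prop:maxK}, so the only thing to be careful about is that ``embedding'' in \ref{prop:maxK}\eqref{prop:maxK:1} gives us injectivity on points and the line-preserving property, while surjectivity (both on points and lines) comes for free from the parameter count. The main (mild) obstacle is the verification in the ($\Leftarrow$) direction that the stars $K_{X_i^\star}$ are not merely subgraphs of the collinearity graph but are p-closed, i.e.\ sides do not meet outside the star; this reduces to the observation that two distinct $3$-subsets of $I$ containing a common element $i$ share, as lines of $\GrasSpace(I,2)$, at most the $2$-subset containing $i$, which is a vertex of the star.
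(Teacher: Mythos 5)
Your proof is correct and follows essentially the same route as the paper: the forward direction comes from the embedding $q$ of Proposition~\ref{prop:maxK}\eqref{prop:maxK:1} upgraded to an isomorphism by comparing the parameters $\binom{n+1}{2}$ and $\binom{n+1}{3}$, and the converse comes from checking that the $n+1$ stars $S(i)=\{e\in\sub_2(I)\colon i\in e\}$ are freely contained $K_n$-graphs in $\GrasSpace(n+1,2)$. You merely spell out the surjectivity count and the p-closedness of the stars in more detail than the paper does.
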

\begin{proof}
  In view of \ref{prop:maxK}\eqref{prop:maxK:1} it suffices to note that the sets 
  $S(i) = \{e\in\sub_2(I)\colon i\in e \}$ 
  are the maximal cliques of $\GrasSpace(I,2)$
  which are not lines of $\GrasSpace(I,2)$ (cf. \cite{perspect}). It is seen that
  each of them is freely contained in $\GrasSpace(I,2)$ for arbitrary set $I$
  with $|I|\geq 3$.
\end{proof}

The results obtained can be summarized in the following Proposition.
\begin{prop}\label{lem:maxK:struktura}
  Let ${G}_i = \struct{X_i,\sub_2(X_i)}$, $i=1,\ldots,m$ be a family of 
  $m$ distinct $K_n$-graphs
  freely contained in a $\binkonf(n,+1)$-configuration ${\goth M} = \struct{S,\lines}$.
  Set $I = \{ 1,\ldots,m \}$,
  $Z_i := X_i \setminus \bigcup_{k\in I\setminus\{ i \}} X_k$,
  $Z := S \setminus \bigcup_{i\in I}X_i$,
  ${\cal E}_i := \{ \overline{e}\colon e\in\sub_2(X_i) \}$,
  ${\cal G}_i := {\cal E}_i \setminus \bigcup_{k\in I\setminus \{ i \}} {\cal E}_k$,
  ${\cal G} := \lines\setminus \bigcup_{i\in I}{\cal E}_i$ for every $i\in I$,
  $q^{i,j}\in X_i,X_j$, $Q:=\{ q^{i,j}\colon \{ i,j\}\in \sub_2(I) \}$.
  \begin{sentences}\itemsep-2pt
  \item\label{maxK:struktura:1}
    If $L\in{\cal G}$ then $L \subset Z$.
  \item\label{maxK:struktura:2}
    Let $L\in\lines$. If $|L\cap Z|\geq 2$ then $L\in{\cal G}$.
  \item\label{maxK:struktura:3}
    $|Z_i| = n - m + 1$ for every $i\in I$.
  \item\label{maxK:struktura:4}
    Let $\{i,j\}\in\sub_2(I)$.
    Then $Z_i\cup \{q^{i,j} \}$ and $Z_j \cup \{ q^{i,j} \}$ are 
    two $K_{n-m+2}$-graphs  with the common sides through $q^{i,j}$. 
    Each of them is freely contained in $\goth M$.
  \item\label{maxK:struktura:5}
    $|Z| = \binom{n+1-m}{2}$
  \item\label{maxK:struktura:6}
    $|{\cal G}| = \binom{n+1-m}{3}$
  \item\label{maxK:struktura:7}
    Let $L\in{\cal G}_i$ for an $i\in I$. 
    Then $|L\cap X_i| = 2$ and $L\cap X_i \subset Z_i$, $|L\cap Z| =1$
  \item\label{maxK:struktura:8}
    Let $e\in\sub_2(Z_i)$ for an $i\in I$. Then $\overline{e}\in{\cal G}_i$.
  \item\label{maxK:struktura:9}
    $|{\cal G}_i| = \binom{|Z_i|}{2} = \binom{n+1-m}{2}$ for every $i\in I$.
  \item\label{maxK:struktura:10}
    Let $i\in I$. Through every point $p\in Z$ there passes exactly one $L\in{\cal G}_i$.
  \item\label{maxK:struktura:11}
    The structure $\struct{Z,{\cal G}}$ is a $\binkonf(n-m,+1)$-configuration
    regularly contained in $\goth M$.
  \end{sentences}
\end{prop}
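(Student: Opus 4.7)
The plan is to establish the eleven items in an order that first settles how lines of $\goth M$ distribute among the $G_i$'s and how many of their points fall into each region. The central fact I will use repeatedly is that $\goth M$ has point rank $n-1$ while each freely contained $G_i$ already uses all $n-1$ sides at every one of its vertices, so every line of $\goth M$ through any point of $X_i$ must be a side of $G_i$.

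Items \eqref{maxK:struktura:1} and \eqref{maxK:struktura:2} are immediate from this observation: a line $L\in{\cal G}$ meeting some $X_i$ would be forced into ${\cal E}_i$, and any side of some $G_i$ has precisely two of its three points in $X_i$, hence at most one in $Z$. For \eqref{maxK:struktura:3}, Proposition \ref{prop:cross-compl-line} gives $X_i\cap X_j\cap X_k=\emptyset$ for distinct $i,j,k$, so $X_i=Z_i\sqcup\{q^{i,j}:j\neq i\}$ is a disjoint decomposition and $|Z_i|=n-(m-1)$. Combining with $|Q|=\binom{m}{2}$ (injectivity of $q$, Prop.\ \ref{prop:maxK}\eqref{prop:maxK:1}) and the partition $S=\bigsqcup_i Z_i\sqcup Q\sqcup Z$, the identity $\binom{n+1}{2}=m(n-m+1)+\binom{m}{2}+\binom{n-m+1}{2}$ yields \eqref{maxK:struktura:5}.

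The rank observation also shows that every line through a point $q^{i,j}$ is simultaneously a side of $G_i$ and of $G_j$; hence sides of $G_i$ through any $q^{i,k}$ are shared with some $G_k$ and thus lie outside ${\cal G}_i$. Conversely, a side $\overline{e}$ with $e=\{a,b\}\in\sub_2(Z_i)$ cannot be a side of any other $G_k$, for this would place $a$ or $b$ in $X_i\cap X_k=\{q^{i,k}\}$, contradicting $e\subset Z_i$. This gives ${\cal G}_i=\{\overline{e}:e\in\sub_2(Z_i)\}$, proving \eqref{maxK:struktura:8} and \eqref{maxK:struktura:9}. For \eqref{maxK:struktura:7}, the third point of such a line is outside $X_i$ by p-closedness of $G_i$, and outside every other $X_k$ (otherwise the line would be in ${\cal E}_k$), hence in $Z$. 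For \eqref{maxK:struktura:10}, the map ${\cal G}_i\to Z$, $L\mapsto L\cap Z$, is injective (two sides of $G_i$ cannot meet outside $X_i$ by p-closedness) between sets of equal cardinality $\binom{n-m+1}{2}$, hence bijective. Items \eqref{maxK:struktura:11} and \eqref{maxK:struktura:6} then follow by counting: through $p\in Z$ the $n-1$ lines of $\goth M$ split into $m$ lines from the disjoint union $\bigsqcup_i{\cal G}_i$ (a line through $p$ shared by two graphs would be forced entirely into $\bigcup_i X_i$, excluding $p$) and $n-1-m$ lines of ${\cal G}$, exhibiting $\struct{Z,{\cal G}}$ as a configuration with rank $n-m-1$; standard incidence counting gives $|{\cal G}|=\binom{n-m+1}{3}$, and \eqref{maxK:struktura:2} supplies regular containment.

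The most delicate item is \eqref{maxK:struktura:4}. That $Z_i\cup\{q^{i,j}\}$ carries a freely contained $K_{n-m+2}$ comes for free as an induced subgraph of the freely contained $G_i$. The subtler claim is that its $n-m+1$ sides through $q^{i,j}$ are literally the same $\goth M$-lines as those of the companion $K_{n-m+2}$ on $Z_j\cup\{q^{i,j}\}$. Here I would again invoke the rank argument at $q^{i,j}$: the $n-1$ lines of $\goth M$ through $q^{i,j}$ are sides of both $G_i$ and $G_j$; of these, the $m-2$ lines of the embedded $\GrasSpace(I,2)$ of the form $\{q^{i,j},q^{i,k},q^{j,k}\}$ account for the pencil directions absorbed by $Q$, while the remaining $n-m+1$ lines of the pencil at $q^{i,j}$ each pair a unique $z\in Z_i$ with a unique partner $\sigma_j(z)\in Z_j$, and these pencil lines are precisely the sought common sides. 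Identifying this matching $Z_i\to Z_j$ at each crossing is the main technical step; everything else is bookkeeping along the disjoint decomposition $S=\bigsqcup_iZ_i\sqcup Q\sqcup Z$ together with the rank observation at vertices of the $G_i$'s.
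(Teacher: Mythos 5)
Your argument is correct and follows essentially the same route as the paper's: the point-rank observation at vertices of the $X_i$, Prop.~\ref{prop:cross-compl-line} for the empty triple intersections and for the collinearity of $q^{i,j},q^{i,k},q^{j,k}$ in items \eqref{maxK:struktura:4} and \eqref{maxK:struktura:7}, p-closedness for the injectivity in \eqref{maxK:struktura:10}, and the same cardinality bookkeeping. The only local divergence is \eqref{maxK:struktura:6}, which the paper obtains by inclusion--exclusion on $\bigcup_{i\in I}{\cal E}_i$ using Cor.~\ref{cor:cross-compl-sides}, whereas you first extract the rank $n-m-1$ at points of $Z$ and then count incidences; both are sound.
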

\begin{proof}
\eqref{maxK:struktura:1}:\quad
  Suppose $a \in L\cap X_i$ for some $i\in I$ and $L\in{\cal G}$. 
  Comparing point-ranks we note that 
  all the lines of $\goth M$ through $a$ are the sides of $G_i$, so $L\in{\cal E}_i$:
  a contradiction.
\par\noindent
\eqref{maxK:struktura:2}:\quad
  Suppose $L\notin{\cal G}$, then there are $i\in I$ and an edge $e$ of $G_i$ such that 
  $L = \overline{e}$. Clearly, $|L\cap X_i|=2$, so $|L\cap Z|\leq 1$.
\par\noindent
\eqref{maxK:struktura:3}:\quad
  Evident: $Z_i = \{ x\in X_i\colon x \neq q^{i,k}, k\in I\setminus\{i\} \}$
  and $|I\setminus\{ i\}| = m-1$.
\par\noindent
\eqref{maxK:struktura:4}:\quad
  Evidently, any two points in $Z_i$ and any two points in $Z_j$ are on a line of $\goth M$:
  a suitable side of $G_i$ or of $G_j$ resp.
  Let $a\in Z_i$. Then 
  $\overline{q^{i,j},a}\setminus\{ q^{i,j},a \}$ is a point $b$ of $X_j$. 
  Suppose $b = q^{j,k}$  for some $k\in I$. 
  From \ref{prop:cross-compl-line}, $a=q^{i,k}$: a contradiction;
  thus $b\in Z_j$.
\par\noindent
\eqref{maxK:struktura:5}:\quad
  By \ref{prop:cross-compl2} and \ref{prop:cross-compl-line},  
  $|\bigcup_{i\in I}X_i| = m\cdot n - \binom{m}{2}\cdot 1 =: \gamma(n)$;
  we compute 
  $\binom{n+1}{2} - (\binom{m+1-n}{2} + \gamma(n))$ = 0.
\par\noindent
\eqref{maxK:struktura:6}:\quad
  Analogously, by \ref{prop:cross-compl-line} and \ref{cor:cross-compl-sides}, 
  $|\bigcup_{i\in I}{\cal E}_i| = 
  m\cdot\binom{n}{2} - \binom{m}{2}\cdot(n-1) + \binom{m}{3}\cdot 1 =:\delta(n)$,
  and then
  $\binom{n+1}{3} - (\binom{n+1-m}{3} + \delta(n)) = 0$.
\par\noindent
\eqref{maxK:struktura:7}:\quad
  It is clear that any $L\in {\cal G}_i\subset{\cal E}_i$ 
  crosses $X_i$ in a pair $a,b$ of points.
  Suppose $a\notin Z_i$. Then $a = q^{i,k}$ for some $k\in I$, $k\neq i$ and then 
  $L\in{\cal E}_i,{\cal E}_k$. This yields $a,b\in X_i$.
  Suppose $L = \{ a,b,c \}$ and $c \in X_k$ for $k\in I$. Then $L\in{\cal E}_k$, $k\neq i$,
  so $L\notin{\cal G}_i$.
\par\noindent
\eqref{maxK:struktura:8}:\quad
  Suppose $\overline{e}\notin{\cal G}_i$, so there is $k\neq i$ such that 
  $\overline{e}\in{\cal E}_k$. This means: $G_k$ contains an edge $e'$ with
  $\overline{e} = \overline{e'}$. 
  Then $e\cap e'\neq \emptyset$, so $e\cap X_k\neq \emptyset$,
  which contradicts $e\subset Z_i$.
\par\noindent
\eqref{maxK:struktura:9}:\quad
  Immediately follows form \eqref{maxK:struktura:7} and \eqref{maxK:struktura:8}.
\par\noindent
\eqref{maxK:struktura:10}:\quad
  In view of \eqref{maxK:struktura:7},
  the map ${\cal G}_i \ni L \longmapsto p\in L\cap Z$ is well defined.
  Clearly, it is injective. 
  From \eqref{maxK:struktura:9} and \eqref{maxK:struktura:3} it is also surjective, 
  and this is exactly the claim.
\par\noindent
\eqref{maxK:struktura:11}:\quad
  Immediate, after \eqref{maxK:struktura:1}, \eqref{maxK:struktura:2},
  \eqref{maxK:struktura:5}, and \eqref{maxK:struktura:6}.
\end{proof}

Let $I = \{ 1,\ldots,m \}$ be arbitrary, let $n > m$ be an integer, and let
$X$ be a set with $n-m+1$ elements.
Let us fix an arbitrary $\binkonf(n-m,+1)$-configuration 
${\goth B} = \struct{Z,{\cal G}}$.
Assume that we have two maps $\mu,\xi$ defined:
$\mu\colon I\longrightarrow Z^{\sub_2(X)}$ 
and
$\xi\colon I\times I\longrightarrow S_X$, such that
$\xi_{i,i} = \id$, $\xi_{i,j} = \xi_{j,i}^{-1}$, and
$\mu_i$ is a bijection for all $i,j\in I$.
Let $S = Z \cup (X\times I) \cup \sub_2(I)$ (to avoid silly errors we assume that
the given three sets are pairwise disjoint).
On $S$ we define the following family $\lines$ of blocks
\begin{eqnarray*}
  \lines & = & {\cal G} \\
  \strut & \cup & \text{ the lines of } \GrasSpace(I,2) \\
  \strut & \cup & \left\{ \{ \{i,j\}, (x,i), (\xi_{i,j}(x),j)  \}
                  \colon \{i,j\}\in\sub_2(I), x \in X  \right\} \\
  \strut & \cup & \left\{ \{ (a,i), (b,i), \mu_i(\{ a,b \}) \} 
                  \colon \{ a,b \}\in \sub_2(X), i\in I  \right\}.
\end{eqnarray*}
Write 
\begin{equation}\label{def:SPS}
 m \bowtie^\mu_\xi {\goth B} = \struct{S,\lines}.
\end{equation}
It needs only a straightforward (though quite tidy) verification to prove that

\begin{center}
{\em ${\goth M}:= m \bowtie^\mu_\xi {\goth B}$ is a $\binkonf(n,+1)$-configuration}. 
\end{center}

For each $i\in I$ we set $Z_i = X\times\{ i \}$, $S_i = \{ e\in\sub_2(I)\colon i\in e \}$,
and $X_i = Z_i \cup S_i$. It is seen that 
{\em $\goth M$ freely contains $m$ $K_n$-graphs $X_1,\ldots,X_m$}.
Indeed, let us write $a\oplus b = c$ when $\{a,b,c \}$ is a line (of the configuration in
question). Then we have $\{i,j\}\oplus\{i,k\} = \{j,k\}$,
$(a,i)\oplus(b,i) = \mu_i(\{a,b\})$, and $(a,i)\oplus\{i,j\} = (\xi_{i,j}(a),j)$.
It is seen that the point $\{i,j\}$ is the perspective center of two subgraphs $Z_i,Z_j$
of $\goth M$. So, we call the 
configuration $m \bowtie ^\mu_{\xi} {\goth B}$
{\em a system of perspective $(n-m+1)$-simplices}.
Define $\mu^o_i\colon\sub_2(Z_i)\longrightarrow Z$ by the formula
$\mu^o_i(\{ (x,i),(y,i) \}) = \mu(\{ x,y \})$; 
it is seen that the configuration $\goth B$ is the common `axis' of  the configurations 
$\struct{Z_i,\sub_2(Z_i)} +_{\mu^o_i} {\goth B}$
contained in $\goth M$.
\par
Note that the words `perspective', `axis', and `simplices' are used to suggest some 
{\em formal} similarities to objects considered in geometry.
Such a usage does not mean that the considered binomial configuration 
$m \bowtie^mu_\xi {\goth B}$ is necessarily realizable in a desarguesian projective
space.

Let us consider two special cases
of the above definition of a system of perspective simplices.
\begin{sentences}\itemsep-2pt
\item
  Let $m = n-1$. Then $\goth B$ consists of a single point $p$: the {\em center} of $\goth M$.
  Consequently, $\mu_i$ is constant, $\mu_i \equiv p$. Moreover, the set $X$ which 
  appears in the definition has 2 elements and then $|S_X| =2$. Then instead of a map $\xi$
  one can consider a graph ${\cal P}\subset\sub_2(I)$ defined by
  $\{ i,j\} \in {\cal P} \iff \xi_{i,j} = \id$. And then 
  {\em the system $m \bowtie^p_\xi \struct{\{ p \},\emptyset}$
  of $m$ perspective segments
  (of 2-simplices) is the multiveblen configuration 
   $\xwlepp({},{m},{\cal P},{},{\GrasSpace(m,2)})$}  
  (cf. \cite{pascvebl}, \cite{mveb2proj}).
\item
  Let $m = n-2$. Then $\goth B$ consists of a single $3$-point line, $L = \{a,b,c\}$.
  Up to a permutation of $X$ there is a unique bijection $\mu\colon\sub_2(X)\longrightarrow L$.
  Finally 
  {\em the system of $m$ perspective triangles 
  $m \bowtie^\mu_\xi \struct{L,\{ L \}}$
  is the system of triangle perspectives
    $\xwlepq({I},{},{\xi},{},{\GrasSpace(I,2)})$}
  (cf. \cite{linstp}).
\end{sentences}

Now, till the end of this section writing
`a configuration contains $m$ graphs' we mean
`a configuration contains {\em at least} $m$ graphs'.
\begin{thm}\label{thm:SPS}
  Let ${\goth M}$ be a $\binkonf(n,+1)$-configuration.
  The following conditions are equivalent.
  \begin{enumerate}[(i)]\itemsep-2pt
  \item
    $\goth M$ freely contains $m$ $K_n$-graphs.
  \item
    $\goth M$ is a system of $m$ perspective $(n-m+1)$-simplices i.e. 
    ${\goth M} \cong m \bowtie^\mu_\xi {\goth B}$ for a 
    $\binkonf(n-m,+1)$-configuration $\goth B$ and some (admissible) maps $\mu,\xi$.
  \end{enumerate}
\end{thm}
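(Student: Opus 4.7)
The implication (ii)$\Rightarrow$(i) has already been handled in the discussion preceding the theorem, which exhibits $m\bowtie^\mu_\xi{\goth B}$ as a $\binkonf(n,+1)$-configuration freely containing the $m$ graphs $X_i=Z_i\cup S_i$. So my plan is to concentrate on the nontrivial direction (i)$\Rightarrow$(ii), using \ref{lem:maxK:struktura} as the scaffolding of the construction.

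Assume ${\goth M}=\struct{S,\lines}$ freely contains $m$ distinct complete $K_n$-graphs $G_1,\ldots,G_m$. First I would set $I=\{1,\ldots,m\}$, choose $q^{i,j}\in X_i\cap X_j$ via \ref{prop:cross-compl2}, and let ${\goth B}:=\struct{Z,{\cal G}}$ be the complementary $\binkonf(n-m,+1)$-configuration produced by \ref{lem:maxK:struktura}\eqref{maxK:struktura:11}. Then I would fix an abstract set $X$ with $|X|=n-m+1$, matching $|Z_i|$ by \ref{lem:maxK:struktura}\eqref{maxK:struktura:3}, and for each $i\in I$ pick an arbitrary bijection $\phi_i\colon X\to Z_i$; this coordinatizes the ``free'' parts of the graphs. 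The maps $\mu$ and $\xi$ are then essentially forced: I would set $\mu_i(\{a,b\}):=$ the third point of the side $\overline{\phi_i(a),\phi_i(b)}$ of $G_i$ (by \ref{lem:maxK:struktura}\eqref{maxK:struktura:7}, \eqref{maxK:struktura:8}, \eqref{maxK:struktura:9} this point lies in $Z$ and $\mu_i$ is a bijection onto $Z$); and for $i\neq j$ define $\xi_{i,j}\in S_X$ by requiring $\phi_j(\xi_{i,j}(x))$ to be the third point of $\overline{q^{i,j},\phi_i(x)}$. By \ref{lem:maxK:struktura}\eqref{maxK:struktura:4} this point falls in $Z_j$, so $\xi_{i,j}$ is a well-defined permutation of $X$; the symmetry of the defining line in $i,j$ yields $\xi_{j,i}=\xi_{i,j}^{-1}$, and setting $\xi_{i,i}=\id$ completes the data.

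The heart of the proof is then to verify that the bijection $F\colon S\to Z\cup(X\times I)\cup\sub_2(I)$ given by $F|_Z=\id_Z$, $F(\phi_i(x))=(x,i)$, $F(q^{i,j})=\{i,j\}$ is an isomorphism onto $m\bowtie^\mu_\xi{\goth B}$. That $F$ is a bijection on point sets follows from \ref{prop:maxK}\eqref{prop:maxK:1} together with the partition $S=Z\sqcup\bigsqcup_i Z_i\sqcup\{q^{i,j}\colon\{i,j\}\in\sub_2(I)\}$ furnished by \ref{lem:maxK:struktura}. To transfer $F$ across the incidence relation I would stratify $\lines$ according to how a line meets this partition, and using \ref{lem:maxK:struktura}\eqref{maxK:struktura:1}, \eqref{maxK:struktura:2}, \eqref{maxK:struktura:7}, \eqref{maxK:struktura:8} together with \ref{prop:cross-compl-line}, show that each $L\in\lines$ is of exactly one of the four types appearing in the definition of $m\bowtie^\mu_\xi{\goth B}$: a line of ${\cal G}$, a triangle line $\{q^{i,j},q^{i,k},q^{j,k}\}$ from the embedded $\GrasSpace(I,2)$, a perspective line $\{q^{i,j},\phi_i(x),\phi_j(\xi_{i,j}(x))\}$, or an axial side $\{\phi_i(a),\phi_i(b),\mu_i(\{a,b\})\}$; by construction $F$ sends each such $L$ to a line of the corresponding family. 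The Vandermonde identity $\binom{n+1}{3}=\binom{n-m+1}{3}+m\binom{n-m+1}{2}+\binom{m}{2}(n-m+1)+\binom{m}{3}$ closes the count and certifies that no line is left out.

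I expect the main obstacle to be this last stratification. The decomposition, the counting, and the definitions of $\mu,\xi$ are routine once \ref{lem:maxK:struktura} is in hand; the real care is needed in distinguishing lines through a vertex $q^{i,j}$ lying in both ${\cal E}_i$ and ${\cal E}_j$ (which yield the $\GrasSpace(I,2)$-lines) from those lying in only one of them (which yield the perspective lines), and in confirming that the remaining sides of each $G_i$ are exactly the axial lines coming from $\mu_i$. Once this bookkeeping is done, the map $F$ together with the data $(\mu,\xi)$ witness the required isomorphism ${\goth M}\cong m\bowtie^\mu_\xi{\goth B}$.
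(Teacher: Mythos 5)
Your proposal is correct and follows essentially the same route as the paper: both extract the data $({\goth B},\mu,\xi)$ from the decomposition in \ref{lem:maxK:struktura} via coordinatizing bijections between $X$ and the sets $Z_i$ (yours are the inverses of the paper's $\nu_i$), and both define the same map $F$ sending $q^{i,j}\mapsto\{i,j\}$, $Z_i\ni z\mapsto(\cdot,i)$, $Z\ni a\mapsto a$. The only difference is that you spell out the final verification --- the four-way stratification of the lines and the closing count --- which the paper dismisses as ``a standard student's exercise''; your version of that check is sound.
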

\begin{proof}
  We have already noticed that $m \bowtie^\mu_\xi {\goth B}$ contains required subgraphs.
  \par
  Let ${\goth M} = \struct{S,\lines}$ and
  let $X_1,\ldots,X_m\in\sub_n(S)$ be pairwise distinct. Assume that 
  $G_i = \struct{X_i,\sub_2(X_i)}$ is freely contained in $\goth M$ for every 
  $i = 1,\ldots,m$.
  Let us adopt the notation of \ref{lem:maxK:struktura}.
  
  Set ${\goth B} = \struct{Z,{\cal G}}$.
  Let us fix a $(n-m+1)$-element set $X$ and let 
  $\nu_i\colon Z_i \longrightarrow X$ be a fixed bijection for each $i\in I$.
  Let $a,b, \in X$ and $i,j\in I$.
  Define 
  \begin{ctext}
    if $a\neq b$ then 
    $\mu_i(\{ a,b \}) = \overline{\nu_i(a)\nu_i(b)}\setminus \{ \nu_i(a),\nu_i(b) \}$,
  \end{ctext}
  $x_{i,i} = \id_X$, and 
  \begin{ctext}
    if $i\neq j$ then
    $\xi_{i,j}(a) = b$ iff $\{q^{i,j},\nu_i(a),\nu_j(b)\}\in\lines$.
  \end{ctext}
  Finally, we define on the points of $\goth M$ the following map $F$:
  $$
    F \colon \left\{
    \begin{array}{rcl}
      Q\ni q^{i,j} & \longmapsto & \{i,j\} \\
      Z_i \ni x & \longmapsto & (x,i)  \\
      Z \ni a & \longmapsto & a
    \end{array} 
    \right.
  $$
  It is a standard student's exercise to compute that $F$ is an isomorphism of $\goth M$ and 
  $m \bowtie^\mu_\xi {\goth B}$.
\end{proof}
\begin{cor}\label{cor:maxK:pod}
  Let $\goth M$ be a $\binkonf(n,+1)$-configuration.
  \begin{enumerate}[(i)]\itemsep-2pt
  \item
    $\goth M$ freely contains $n-1$ graphs $K_n$ iff $\goth M$ is (isomorphic to)
    a multiveblen configuration.
  \item
    $\goth M$ freely contains $n-2$ graphs $K_n$ iff $\goth M$ is (isomorphic to)
    a system of triangle perspectives.
  \end{enumerate}
\end{cor}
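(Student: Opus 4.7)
The plan is to deduce both statements directly from Theorem~\ref{thm:SPS} by specializing the parameter $m$ and then invoking the two special cases of a system of perspective simplices that were computed immediately before that theorem.

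First I would handle (i). By Theorem~\ref{thm:SPS}, $\goth M$ freely contains $n-1$ $K_n$-subgraphs iff ${\goth M}\cong (n-1)\bowtie^\mu_\xi {\goth B}$ for some $\binkonf(n-(n-1),+1) = \binkonf(1,+1)$-configuration $\goth B$ and some admissible maps $\mu,\xi$. A $\binkonf(1,+1)$-configuration has a single point and no lines (plug $n=1$ into the parameters $\binom{n+1}{2}$, $\binom{n+1}{3}$). Thus $\goth B = \struct{\{p\},\emptyset}$, each $\mu_i$ is the constant map $\mu_i\equiv p$, and the ``fibre'' set $X$ has $n-m+1=2$ elements, so $|S_X|=2$. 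Then $\xi$ reduces to a single binary choice on each pair $\{i,j\}\in\sub_2(I)$, which is precisely the data of a graph ${\cal P}\subset\sub_2(I)$. This is exactly the data defining the multiveblen configuration $\xwlepp({},{n-1},{\cal P},{},{\GrasSpace(n-1,2)})$, as observed in the first bullet after~\eqref{def:SPS}. So (i) follows.

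For (ii), I would again apply Theorem~\ref{thm:SPS}, now with $m=n-2$. The axis $\goth B$ is a $\binkonf(2,+1)$-configuration, i.e.\ a single 3-point line $L=\{a,b,c\}$. The fibre set $X$ has $n-m+1=3$ elements, and since $|\sub_2(X)|=3=|L|$, the labelling $\mu_i\colon\sub_2(X)\longrightarrow L$ is unique up to a permutation of $X$, so the only non-trivial data is the family $\xi$. This matches the definition of the system of triangle perspectives $\xwlepq({I},{},{\xi},{},{\GrasSpace(I,2)})$, as noted in the second bullet after~\eqref{def:SPS}. Hence (ii) follows as well.

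There is no real obstacle here: both parts are purely a matter of reading off what the general construction $m\bowtie^\mu_\xi{\goth B}$ degenerates to when $n-m\in\{1,2\}$, and checking that in those two degenerate cases the only remaining freedom in $\xi$ (and trivially in $\mu$) coincides with the defining data of the multiveblen configurations and of the systems of triangle perspectives respectively. The only point worth being slightly careful about is verifying that a $\binkonf(1,+1)$-configuration really is a singleton and that a $\binkonf(2,+1)$-configuration really is a single triangle-line, but these are immediate from the parameter formulas $v=\binom{n+1}{2}$, $b=\binom{n+1}{3}$ applied with $n=1$ and $n=2$.
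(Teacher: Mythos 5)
Your proof is correct and follows exactly the route the paper intends: Corollary~\ref{cor:maxK:pod} is meant to be read off from Theorem~\ref{thm:SPS} together with the two special cases ($m=n-1$ giving the multiveblen configurations, $m=n-2$ giving the systems of triangle perspectives) worked out immediately before that theorem. Your verification that a $\binkonf(1,+1)$-configuration is a singleton and a $\binkonf(2,+1)$-configuration is a single $3$-point line matches the paper's parameter conventions, so nothing is missing.
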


Particular instances of \ref{cor:maxKgras} and \ref{cor:maxK:pod} in case $n=4$ can be 
found in \cite{klik:VC}: 
a $10_3$ configuration contains four $K_4$ iff it contains five $K_4$
iff it is a Desargues Configuration; 
a $10_3$ configuration contains three $K_4$ iff it is a 
multiveblen configuration i.e. iff 
it is the Desargues or it is the Kantor $10_3 G$-configuration (cf. \cite{kantor});
a $10_3$ configuration contains two $K_4$ iff it is a system of triangle perspectives
i.e. 
it is one of the following: the Desargues, the Kantor $10_3G$, or the fez configuration.

\section{Existence problems}

\begin{prop}\label{jedenup}
  If there is a $\binkonf(n,0)$-configuration which freely contains exactly
  $m$ maximal complete subgraphs where $m\leq n-2$ then there exists
  a $\binkonf(n,1)$-configuration which freely contains exactly $m+1$ 
  maximal complete subgraphs.
\end{prop}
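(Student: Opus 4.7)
\emph{Proof plan.} The strategy is to apply the labelling construction $K_X +_\mu {\goth N}$ (introduced after \ref{cor:pelny2horyzont}) with $\mu$ chosen so that each of the $m$ freely contained $K_{n-1}$-subgraphs $Y_1,\ldots,Y_m$ of ${\goth N}$ becomes a $K_n$-subgraph of ${\goth M}:=K_X +_\mu {\goth N}$ after adjoining one new vertex, while $K_X$ itself supplies the $(m+1)$-st complete graph.

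Since ${\goth N}$ is a $\binkonf(n-1,+1)$-configuration, applying \ref{prop:cross-compl2} and \ref{prop:cross-compl-line} inside ${\goth N}$ gives common vertices $q^{k,j}:=Y_k\cap Y_j$ together with collinear triples $\{q^{k,j},q^{k,l},q^{j,l}\}$. I would fix an $n$-element set $X$ disjoint from the points of ${\goth N}$, choose pairwise distinct $v_1,\ldots,v_m\in X$ (possible since $m\le n-2<n$), and for each $k$ build a bijection $\nu_k\colon X\setminus\{v_k\}\to Y_k$ by setting $\nu_k(v_j):=q^{k,j}$ for $j\ne k$ and extending arbitrarily to a bijection of the $n-m$ remaining elements of $X\setminus\{v_1,\ldots,v_m\}$ onto the $n-m$ points of $Y_k\setminus\{q^{k,j}\colon j\ne k\}$. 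The rule $\mu(\{v_k,w\}):=\nu_k(w)$ is unambiguous on overlapping pairs $\{v_k,v_j\}$ because $\nu_k(v_j)=q^{k,j}=q^{j,k}=\nu_j(v_k)$; by \ref{lem:maxK:struktura}\eqref{maxK:struktura:5} its image is exactly $Y_1\cup\cdots\cup Y_m$, of cardinality $\binom{n}{2}-\binom{n-m}{2}$, so $\mu$ extends to a bijection $\sub_2(X)\to\mathrm{points}({\goth N})$ by sending the $\binom{n-m}{2}$ pairs in $\sub_2(X\setminus\{v_1,\ldots,v_m\})$ bijectively onto ${\goth N}\setminus(Y_1\cup\cdots\cup Y_m)$.

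Set $X_k:=Y_k\cup\{v_k\}$. Checking that each $X_k$ is freely contained in ${\goth M}$ as a $K_n$-graph is routine: its sides are either sides of $Y_k$ in ${\goth N}$ (whose third points lie outside $Y_k$ by free containment of $Y_k$ in ${\goth N}$) or triples $\{v_k,\nu_k^{-1}(p),p\}$ with $p\in Y_k$ (third point $\nu_k^{-1}(p)\in X\setminus\{v_k\}$ lying outside $X_k$), so distinctness of sides and $p$-closedness hold. Together with $K_X$ this exhibits at least $m+1$ freely contained $K_n$-subgraphs of ${\goth M}$.

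The main obstacle is to verify that no further freely contained $K_n$-subgraph exists. If $Y\ne K_X$ is one, then by \ref{prop:cross-compl2} we have $Y\cap X=\{v\}$ for a single $v$, and $Y':=Y\setminus\{v\}$ is a $K_{n-1}$-graph freely contained in ${\goth N}$ (since lines of ${\goth M}$ with $\ge 2$ points in ${\goth N}$ are lines of ${\goth N}$, and $p$-closedness transfers from $Y$), hence $Y'=Y_k$ for some $k$. This forces $\{\mu(\{v,w\})\colon w\in X\setminus\{v\}\}=Y_k$. If $v=v_j$, the left-hand side equals $Y_j$, whence $j=k$ and $Y=X_k$. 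If instead $v\in X\setminus\{v_1,\ldots,v_m\}$, contradictions arise: for $m\ge 2$ the equality forces $\nu_j(v)\in Y_k$ for every $j\ne k$, contradicting the fact that $\nu_j(v)\in Y_j\setminus\{q^{j,l}\colon l\ne j\}$ is disjoint from $Y_k$; for $m=1$ the hypothesis $m\le n-2$ gives $n\ge 3$, so the set contains $\mu(\{v,w\})$ for any $w\in X\setminus\{v_1,v\}\ne\emptyset$, and such values land in ${\goth N}\setminus Y_1$ by our bijection, so cannot equal $Y_1$; the case $m=0$ is vacuous. This closes the count at exactly $m+1$ and completes the plan.
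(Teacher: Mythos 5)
Your proposal is correct and follows essentially the same route as the paper: you adjoin an $n$-set $X$ via a labelling $\mu$ engineered so that the $\mu$-neighbourhood of a designated vertex $v_k$ is exactly $Y_k$ (your three types of values of $\mu$ — the points $q^{k,j}$, the points of $Y_k\setminus Q$, and the points off $\bigcup_i Y_i$ — are precisely the paper's three families of new lines), and the uniqueness count likewise reduces any further $K_n$ to a $K_{n-1}$ freely contained in $\goth N$ and then pins down its extra vertex. The only (immaterial) difference is that the paper finishes the uniqueness step by invoking Proposition \ref{prop:cross-compl2} once more, whereas you analyse the $\mu$-neighbourhood of $v$ directly.
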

\begin{proof}
  Let $Y_1,\ldots,Y_m$ be the $K_{n-1}$-subgraphs of a 
  $\binkonf(n,0)$-configuration ${\goth M} = \struct{S,\lines}$.
  Set $I = \{ 1,\ldots,m \}$.
  Let us reprezent $\goth M$ as a system of perspectives, so
  let $q_{i,j} \in Y_i \cap Y_j$ for distinct $i,j$ and 
  $Q= \{ q_{i,j}\colon \{i,j\}\in \sub_2(I) \}$, 
  $G_i = Y_i\setminus Q$, and let $Z$ be an ``axis" i.e. the intersection
  of all the complementary subconfigurations of the $Y_i$'es.
  Let $X$ be an arbitrary set disjoint with $S$ of cardinality $n$
  and let $P\in\sub_m(X)$. Let us number the elements of $P$:
  $P = \{ p_1,\ldots,p_m \}$ and the elements of $X\setminus P$:
  $X\setminus P = \{ x_1,\ldots,x_{n-m} \}$. Note that $n-m\geq 2$.
  For each $\{i,j\} \in \sub_2(I)$ we introduce the triple 
  $\{ p_i,p_j,q_{i,j} \}$ as a new line. 
  The number of points in each of the sets $G_i$ is $n-m$; let 
  $\sigma_i$ be an arbitrary bijection of $G_i$ onto $X\setminus P$.
  The second family of new lines consists of the triples
  $\{ p_i,x,\sigma_i(x) \}$ with $x\in G_i$, $i\in I$.
  Finally, the third class of the new lines consists of the triples
  $\{ x_i,x_j,\mu(x_i,x_j) \}$, where $\mu$ is an arbtrary labelling of
  the edges of the graph $K_{X\setminus P}$ by the elements of $Z$:
  it is possible due to cardinalities of the sets in question.
  Let ${\goth M}^*$ be the structure defined on the point universe
  $S\cup X$, whose lines are the lines of $\goth M$ and the three classes
  of new lines introduced above. It is seen that ${\goth M}^*$ is
  a $\binkonf(n,1)$-configuration. It is also evident that $K_X$ and
  $K_{Y_i\cup\{ p_i \}}$ for $i\in I$  are $K_n$-subgraphs freely contained in
  ${\goth M}^*$.
  Suppose that ${\goth M}^*$ contains another freely contained $K_n$-subgraph $K_Y$,
  let $p\in X\cap Y$. Then $Y_0:= Y\setminus X = Y\setminus \{ p \}$ is a 
  $K_{n-1}$ subgraph of $\goth M$. Consequently, $Y_0 = Y_i$ for some $i\in I$.
  Suppose that $p\neq p_i$, then the two subgraphs $K_{Y_i\cup\{ p_i \}}$
  and $K_Y$ freely contained in ${\goth M}^*$ have more than a point in common.
  Consequently, $p=p_i$ and $Y = Y_i\cup\{ p_i \}$.
  Thus ${\goth M}^*$ freely contains exactly $m+1$ $K_n$-subgraphs.
\end{proof}

\begin{prop}\label{dwadown}
  If there exists a $\binkonf(n,+1)$-configuration which freely contains exactly two
  $K_n$ graphs then there is also a $\binkonf(n,1)$-configuration
  without any $K_n$-subgraph freely contained in it.
\end{prop}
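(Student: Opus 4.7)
The plan is to take the hypothesized $\binkonf(n,+1)$-configuration $\goth M$ with exactly two freely contained $K_n$-graphs and perform a local surgery that swaps two of its lines, destroying both $K_n$-subgraphs without introducing any new one. By Proposition \ref{prop:cross-compl2} the two graphs $X_1,X_2$ share a single vertex $p$, and Proposition \ref{prop:cross-compl3}\eqref{cros-compl3:2} guarantees that every side of $K_{X_1}$ missing $p$ meets exactly one side of $K_{X_2}$ missing $p$ in a point of the common ``axis'' $Z_1\cap Z_2$.

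Pick such a pair of sides: $e_1=\{a_1,b_1\}\subset X_1\setminus\{p\}$ and $e_2=\{a_2,b_2\}\subset X_2\setminus\{p\}$ with $\overline{e_1}\cap\overline{e_2}=\{q\}$. Since the perspectivity through $p$ matches the two endpoints of $e_1$ with two distinct points of $X_2\setminus\{p\}$, after possibly swapping the labels $a_2\leftrightarrow b_2$ we may assume $b_2\notin\overline{p,a_1}$ and $a_2\notin\overline{p,b_1}$. Now define $\goth M^{\ast}$ by deleting the two lines $\{q,a_1,b_1\}$ and $\{q,a_2,b_2\}$ from $\goth M$ and adjoining the two triples $\{q,a_1,b_2\}$ and $\{q,b_1,a_2\}$. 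The point set, the line count and the rank of every point are preserved; the labelling condition just imposed ensures that no two points become collinear on more than one of the resulting lines, so $\goth M^{\ast}$ is again a $\binkonf(n,1)$-configuration.

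It remains to show that $\goth M^{\ast}$ freely contains no $K_n$. The collinearity graphs of $\goth M$ and $\goth M^{\ast}$ differ in exactly four edges: $\{a_1,b_1\}$ and $\{a_2,b_2\}$ are removed while $\{a_1,b_2\}$ and $\{a_2,b_1\}$ are added. Any freely embedded $K_n$ in $\goth M^{\ast}$ therefore either sits in the common part of the two collinearity graphs, in which case it is a $K_n$ of $\goth M$ and must equal $X_1$ or $X_2$---but each of these contains a deleted edge, contradiction---or it uses at least one of the two new edges. A short argument then forces such a graph to contain exactly one endpoint of $e_1$ and exactly one endpoint of $e_2$, these two endpoints being collinear in $\goth M^{\ast}$; up to symmetry we may take $a_1,b_2$ inside the graph and $a_2,b_1$ outside. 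The hard part, which I expect to occupy most of the formal proof, is the ensuing three-case analysis on whether $a_2\in\overline{p,a_1}$ and whether $b_2\in\overline{p,b_1}$: in every admissible case, tracing the sides through the perspectivity centred at $p$ produces a pair of vertices of the putative $K_n$ that fail to be collinear in $\goth M^{\ast}$, yielding the required contradiction.
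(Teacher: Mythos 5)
Your proposal follows essentially the same route as the paper's own proof: the same line--swap surgery on a crossing pair of sides $e_1,e_2$ of the two $K_n$-graphs (with the same normalization $b_2\notin\overline{p,a_1}$, $a_2\notin\overline{p,b_1}$), the same reduction via the collinearity graph to a putative $K_n$ meeting each of $e_1,e_2$ in exactly one point, and the same concluding three-case analysis on whether $a_2\in\overline{p,a_1}$ and $b_2\in\overline{p,b_1}$. The case analysis you defer is precisely where the paper's proof does its remaining work (in each case one chases the perspectivity through $p$ to a vertex forced into $Y$ that is not collinear with $a_1$ or $b_2$ in ${\goth M}^\ast$), and it closes exactly as you predict.
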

\begin{proof}
  \def\Mx{{\goth M}^\ast}
  Let $n \geq 4$. Let ${\goth M} = \struct{S,\lines}$ be a $\binkonf(n,+1)$-configuration
  which freely contains exactly two complete $K_n$-graphs $X_1,X_2$.
  Let $p\in X_1\cap X_2$. Set $A_i = X_i\setminus\{ p \}$.
  Let $e_1\in\sub_2(A_1)$ and $e_2\in\sub_2(A_2)$ such that 
  $\overline{e_1}\cap\overline{e_2}\ni q$ for a point $q$ 
  (cf. \ref{prop:cross-compl3}\eqref{cros-compl3:2})
  Let $e_1 = \{a_1,b_1\}$, $e_2 = \{a_2,b_2\}$ such that 
  $b_2\notin\overline{p,a_1}$ and $a_2\notin\overline{p,b_1}$.
  We replace the two lines
  $\{ q,a_1,b_1 \}$ and $\{ q,a_2,b_2 \}$ of $\goth M$
  by two other triples
  $\{ q,a_1,b_2 \}$ and $\{ q,b_1,a_2 \}$;
  let $\Mx$ be the obtained incidence structure. 
  Clearly, $\Mx$ is a $\binkonf(n,+1)$-configuration.
  {\em $\Mx$ does not freely contain any $K_n$-graph}
  \par\noindent{\sc Indeed}:
  Suppose that $\Mx$ freely contains a $K_n$-graph $Y$.
  Let us have a look at the collinearity graph $A_{\goth K}$ of an arbitrary configuration
  $\goth K$.
  Clearly, if $K_X$ is contained in $\goth K$ then $K_X$ is a subgraph of $A_{\goth K}$.
  In our case exactly two edges $e_1,e_2$ of $A_{\goth M}$ were replaced by two (other) edges
  $\{ a_1,b_2 \}$, $\{ a_2,b_1 \}$ to form $A_{\Mx}$.
  So, $K_Y$ cannot be build entirely from the edges missing $e_1\cup e_2$.
  \par
  (1) $Y\cap e_1 \neq \emptyset \neq Y\cap e_2$. Indeed, suppose, eg. $Y\cap e_1 = \emptyset$.
  The same pairs of points in $S\setminus e_1$ (except $e_2$) are collinear in 
  $\goth M$ and in $\Mx$ and therefore $Y$ is a $K_n$-graph in $\goth M$: a contradiction.
  \par
  (2) $e_1\not\subset Y$ and $e_2\not\subset Y$: the pair of points in $e_1$ is not
  collinear in $\Mx$, and, analogously the points in $e_2$ are also not collinear.
  \par
  So, $Y$ contains exactly one point $x_1$ in $e_1$ and one point $y_2$ in $e_2$.
  Clearly, $x_1,y_2$ must be collinear in $\Mx$.
  \par
  (3) Suppose that $y_2 \in \overline{p,x_1}$. Without loss of generality we can 
  take $x_1 = a_1$, $y_2 = a_2$. Then $b_1,b_2\notin Y$.
  For points in $S\setminus \{ b_1,b_2 \}$ exactly the same pairs are collinear in 
  $\goth M$ and in $\Mx$, so $Y$ is a subgraph of $\goth M$, which is impossible.
  \par
  Without loss of generality we can assume that $a_1,b_2 \in Y$ and $a_2,b_1\notin Y$.
  The following three cases should be considered:
  \begin{enumerate}[(1)]\setcounter{enumi}{3}\itemsep-2pt
  \item\label{bzz1}
    $a_2 \in \overline{p,a_1}$, and $b_2 \in \overline{p,b_1}$,
  \item\label{bzz2}
    $a_2 \in \overline{p,a_1}$ and $b_2 \notin \overline{p,b_1}$,
  \item\label{bzz3}
    $a_2 \notin \overline{p,a_1}$ and $b_2 \notin \overline{p,b_1}$.
  \end{enumerate}
  \par\eqref{bzz1}:
    Note that the sides of $Y$ and the lines of $\Mx$ through vertices of $Y$ coincide. 
    So, $\{ p, a_1\}$ is an edge of $Y$ and thus $p\in Y$.
    Take any point $c_1\in A_1\setminus e_1$.
    Then $c_1,b_2$ are not collinear in $\Mx$, so $c_1\notin Y$. 
    Let $c_2\in \overline{p,c_1}\setminus\{ p,c_1 \}$ (this line of $\goth M$ 
    was unchanged), then $c_2\in Y$.
    But $c_2$ and $a_1$ are not collinear in $\Mx$ and a  contradiction arizes.
  \par\eqref{bzz2}:
    In this case also necessarily $p\in Y$. 
    Let $c_2\in\overline{p,b_1}\setminus\{ p,b_1 \}$; then $c_2\neq b_2$ and $c_2\in Y$.
    But, contradictory, $a_1,c_2$ are not collinear in $\Mx$.
  \par\eqref{bzz3}:
    Now, either $p\in Y$ or $c_2 \in Y$,
    where $\{p,a_1,c_2\}\in\lines$ ($c_2\in X_2$). 
    If $p\in Y$ then we take 
    $c_1\in \overline{p,a_2}\setminus\{ p,a_2 \}$;
    then $c_1\in Y$. An inconsistency appears, as $c_1,b_2$ are not collinear in $\Mx$.
    Consequently, $c_2\in Y$ and $p\notin Y$.
    Therefore, the point $c_1$ in $\overline{p,b_2}\setminus\{p,b_2\}$ is in $Y$ 
    ($c_1 \in X_1$). But $c_1,c_2$ are not collinear in $\Mx$, though.
\end{proof}

As an important consequence we obtain now
\begin{thm}\label{thm:allKn}
  Let $m,n$ be integers, $4\leq n$, and $1\leq m \leq n-1$ or $m=n+1$.
  Then there exists a $\binkonf(n,+1)$-configuration which freely contains
  exactly $m$ $K_n$-graphs.
\end{thm}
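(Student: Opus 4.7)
The plan is an induction on $n \geq 4$, treating $m = n+1$ separately and then building every admissible value $m \in \{1, \ldots, n-1\}$ from the dimension-$(n-1)$ hypothesis via Propositions \ref{jedenup} and \ref{dwadown}.

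The case $m = n+1$ is handled once and for all by $\goth M = \GrasSpace(n+1,2)$: this is a $\binkonf(n,+1)$-configuration by Prop.~\ref{pelny2minSTS}, and Cor.~\ref{cor:maxKgras} records that it freely contains $n+1$ complete $K_n$-subgraphs (the maximum, by Prop.~\ref{prop:maxK}\eqref{prop:maxK:2}). For the base $n=4$ the admissible small values are $m\in\{1,2,3,5\}$, and each is realised by a well-known $10_3$-configuration: Desargues for $m=5$, the Kantor $10_3 G$-configuration for $m=3$, the fez for $m=2$, and a $10_3$-configuration with exactly one $K_4$ extracted from the classification in \cite{klik:VC} for $m=1$.

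For the inductive step, assume the theorem at dimension $n-1\geq 4$ and fix $m$ with $1\leq m\leq n-1$. Since $\binkonf(n,0)$ and $\binkonf(n-1,+1)$ denote the same parameter class, a $\binkonf(n-1,+1)$-configuration freely containing exactly $m'$ maximal $K_{n-1}$-subgraphs is, provided $m'\leq n-2$, exactly the input required by Prop.~\ref{jedenup}, whose output is a $\binkonf(n,+1)$-configuration freely containing exactly $m'+1$ $K_n$-subgraphs. For $2\leq m\leq n-1$ I would take $m'=m-1\in\{1,\ldots,n-2\}$: the induction hypothesis supplies the input and Prop.~\ref{jedenup} closes the step. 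For $m=1$ I would first use the induction hypothesis to obtain a $\binkonf(n-1,+1)$-configuration with exactly two $K_{n-1}$-subgraphs (admissible because $2\in\{1,\ldots,n-2\}$ for $n\geq 5$), feed it into Prop.~\ref{dwadown} (applied at dimension $n-1$) to obtain a $\binkonf(n-1,+1)$-configuration without any $K_{n-1}$-subgraph, and finally invoke Prop.~\ref{jedenup} with $m'=0$ to reach $m=1$ at dimension $n$.

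The substantive constructions have already been carried out in Propositions \ref{jedenup} and \ref{dwadown}; the present theorem is essentially a bookkeeping argument that glues them together. The only delicate point is making sure the base case $n=4$ is complete, in particular the existence of a $10_3$-configuration with a unique $K_4$-subgraph, which is guaranteed by the classification in \cite{klik:VC}. Once this is granted the induction runs mechanically, with no further obstruction.
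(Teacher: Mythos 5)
Your proof is correct and follows essentially the same route as the paper: induction on $n$ with the base case $n=4$ taken from the classification in \cite{klik:VC}, the maximum $m=n+1$ from Cor.~\ref{cor:maxKgras}, Prop.~\ref{jedenup} to step the count up by one, and Prop.~\ref{dwadown} to manufacture the zero case needed for $m=1$. The only (immaterial) difference is bookkeeping: the paper carries $m=0$ inside its induction hypothesis, while you regenerate it at each level from the $m=2$ case via Prop.~\ref{dwadown}.
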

\begin{proof}
 Let $n\geq 4$ be an integer and let 
 $J(n)$ be the set of integers $m$ such that there is a $\binkonf(n,1)$- configuration
 with  exactly $m$ freely contained subgraphs $K_n$.
 From \ref{prop:maxK}, $J(n)\subseteq \{ 0,1,\ldots,n-1,n+1 \} =: F(n)$.
 We need to prove that $J(n) = F(n)$ for every integer $n\geq 4$.
 Clearly, this equality holds for $n=4$ (cf. \cite{klik:VC}).
 \par
 Assume that $J(n) = F(n)$ holds for an integer $n$. 
 From \ref{jedenup} and  \ref{cor:maxKgras}, 
 $J(n+1)  \supseteq \{ 1,2,\ldots,n,n+2 \}$.
 Then from \ref{dwadown} we get $0\in J(n+1)$ and therefore
 $J(n+1) = F(n+1)$. By induction, we are done.
\end{proof}
\begin{rem}
  There is no reasonable $\binkonf({0},2)$-configuration,
  there is exactly one $\binkonf({0},3)$-configuration: a line
  $\GrasSpace(3,2)$, with exactly $3$ freely contained copies of $K_2$,
  and there is exactly one $\binkonf({0},4)$-configuration:
  the Veblen configuration $\GrasSpace(4,2)$, 
  which freely contains 4 copies of $K_3$. 
\end{rem}

\section{Other known examples}

\subsection{Combinatorial Veronesians}

Let us adopt the notation of \cite{combver}.
Let $|X| = 3$.
Then the combinatorial Veronesian $\VerSpace(X,k)$ is 
a $\binkonf(k,+2)$-configuration; 
its point set is the set $\msub_k(X)$ of the $k$-element multisets with 
elements in $X$.
The maximal cliques of $\VerSpace(X,k)$ were established in 
\cite{veradjac}. From that results we learn that
\begin{fact}
  The $K_{k+1}$ graphs freely contained in $\VerSpace(X,k)$ are the sets
  $X_{a,b} := \msub_k(\{ a,b \})$, 
  $X_{b,c} := \msub_k(\{ b,c \})$, and 
  $X_{c,a} := \msub_k(\{ c,a \})$.
  Its axis is the set $X^k$. For $u\in\sub_2(X)$ the $\binkonf(k,+1)$-subconfiguration
  of $\VerSpace(X,k)$ complementary to $X_u$ 
  (with the universe $y\msub_{k-1}(X)$, $y\in X\setminus u$) 
  is isomorphic to $\VerSpace(X,k-1)$.
\end{fact}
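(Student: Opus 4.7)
The plan is first to recognise that $\VerSpace(X,k)$ with $|X|=3$ has exactly the parameters of a $\binkonf(k+1,+1)$-configuration, since the shifted binomial notation makes $\binkonf(k,+2)$ and $\binkonf(k+1,+1)$ the same parameter tuple. So the machinery of Sections~2--3 applies with $n=k+1$, and Proposition~\ref{prop:maxK}\eqref{prop:maxK:2} already bounds the number of freely contained $K_{k+1}$-graphs by $k+2$.

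For the identification of the three specific subgraphs: for each $u=\{x,y\}\in\sub_2(X)$ I would list the $k+1$ elements of $X_u=\msub_k(u)$ as $x^k,x^{k-1}y,\ldots,y^k$. Using the definition of the line structure of $\VerSpace(X,k)$ from~\cite{combver} I would check that every pair from $X_u$ is collinear in $\VerSpace(X,k)$, that the resulting $\binom{k+1}{2}$ sides are pairwise distinct, and that they do not meet outside $X_u$; this certifies that $K_{X_u}$ is freely contained. That these are the \emph{only} such subgraphs (assuming $k$ is large enough that Proposition~\ref{prop:maxK}\eqref{prop:maxK:3} does not force a fourth) is precisely the maximal-clique classification obtained in~\cite{veradjac}, which I would invoke directly.

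With three freely contained $K_{k+1}$-graphs in hand, Proposition~\ref{lem:maxK:struktura} applies with $n=k+1$ and $m=3$. Part~\eqref{maxK:struktura:5} gives $|Z|=\binom{k-1}{2}$, and this matches the number of $k$-multisets on the three-element set $X$ which use every letter at least once (by stars-and-bars); since a $k$-multiset on $X$ lies in none of $X_{a,b},X_{b,c},X_{c,a}$ precisely when its support equals $X$, this identifies the axis $Z$ with the set $X^k$. For the complementary subconfiguration of $X_u$, Proposition~\ref{pelny2horyzont} yields a $\binkonf(k+1,0)=\binkonf(k,+1)$-subconfiguration carried by $\msub_k(X)\setminus\msub_k(u)=y\cdot\msub_{k-1}(X)$, where $\{y\}=X\setminus u$; the bijection $y\cdot M\mapsto M$ sends this point-set onto $\msub_{k-1}(X)$, and a direct check from the definition of $\VerSpace$ shows it carries lines to lines, giving the required isomorphism with $\VerSpace(X,k-1)$.

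The main obstacle in this outline is the uniqueness clause: I would lean on the clique characterisation of~\cite{veradjac} rather than re-prove from scratch that no further $K_{k+1}$-graph can be freely contained, while the remaining parts of the Fact reduce to bookkeeping with Propositions~\ref{lem:maxK:struktura} and~\ref{pelny2horyzont}.
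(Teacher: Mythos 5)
The paper gives no proof of this Fact at all: it is stated as an immediate consequence of the classification of maximal cliques of $\VerSpace(X,k)$ in \cite{veradjac}, with no further argument. Your proposal therefore coincides with the paper only on the uniqueness clause (both of you delegate it to \cite{veradjac}), and everywhere else you supply a genuine argument where the paper supplies none; that argument is sound. The direct check that each $\msub_k(\{x,y\})$ is freely contained works exactly as you outline: the line through $x^iy^{k-i}$ and $x^jy^{k-j}$ ($i<j$) is $x^iy^{k-j}X^{j-i}$, its third point involves the remaining letter, so no side meets $\msub_k(\{x,y\})$ in three points and distinct sides have distinct third points, giving p-closedness. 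Feeding $n=k+1$, $m=3$ into \ref{lem:maxK:struktura} and using \ref{pelny2horyzont} plus the division-by-$y$ bijection onto $\msub_{k-1}(X)$ for the complement is precisely the bookkeeping the paper leaves implicit, and it buys a self-contained verification of the parameter and isomorphism claims that the bare citation does not. Two caveats worth recording. First, when you invoke \cite{veradjac} you still owe the (easy) bridge that a freely contained $K_{k+1}$ is a $(k+1)$-clique of the collinearity graph, hence for $k>2$ lies in one of the listed maximal cliques that are not lines, hence equals some $X_u$ by cardinality; the paper glosses over this too. Second, your identification of the axis with the $\binom{k-1}{2}$ multisets of full support is what the paper's own definition of the axis (the configuration on $Z=S\setminus\bigcup_i X_i$) requires, even though the symbol $X^k$ in the statement would, under the $eX^r$ convention of \cite{combver}, literally denote the three-point line $\{a^k,b^k,c^k\}$ of perspective centers; your reading is the internally consistent one, but you should flag the discrepancy rather than silently resolve it.
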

\begin{cor}
  A $\binkonf(k,+1)$-Veronesian with $k>2$ 
  contains exactly three complete
  $K_{k}$-graphs freely contained in it.
\end{cor}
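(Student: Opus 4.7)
The plan is to derive the corollary as a direct reindexing of the preceding Fact. First I would identify what "$\binkonf(k,+1)$-Veronesian" means: since the parameters $\binkonf(a,b)$ depend only on the sum $a+b$, the equality $\binkonf(k-1,+2)=\binkonf(k,+1)$ shows that the only combinatorial Veronesians $\VerSpace(X,n)$ (with $|X|=3$) satisfying these parameters are those with $n=k-1$. So the statement to prove is that $\VerSpace(X,k-1)$ with $|X|=3$ contains exactly three freely contained $K_k$-graphs, and the hypothesis $k>2$ just guarantees $k-1\geq 2$ so that $\VerSpace(X,k-1)$ is a nondegenerate configuration.

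Next I would apply the Fact with $k$ replaced by $k-1$. Writing $X=\{a,b,c\}$, the Fact enumerates the freely contained $K_{(k-1)+1}=K_k$ graphs of $\VerSpace(X,k-1)$ as precisely
\[
  \msub_{k-1}(\{a,b\}),\quad \msub_{k-1}(\{b,c\}),\quad \msub_{k-1}(\{c,a\}),
\]
one for each $u\in\sub_2(X)$. Since $|\sub_2(X)|=3$, this gives exactly three such complete subgraphs, and the ``are'' in the Fact's phrasing guarantees that the enumeration is exhaustive.

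The only step requiring any genuine care is the parameter-matching identification of a $\binkonf(k,+1)$-Veronesian with $\VerSpace(X,k-1)$; once this is in place, the corollary follows by substitution. I do not expect a real obstacle, since the Fact already does all the combinatorial work of locating and classifying the maximal complete subgraphs in an arbitrary Veronesian over a 3-element alphabet.
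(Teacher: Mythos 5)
Your proposal is correct and matches the paper's (implicit) argument: the corollary is stated without proof precisely because it is the quoted Fact reindexed, identifying a $\binkonf(k,+1)$-Veronesian with $\VerSpace(X,k-1)$ for $|X|=3$ and reading off the three freely contained $K_k$-graphs $\msub_{k-1}(u)$, $u\in\sub_2(X)$. The parameter-matching step you flag as the only delicate point is exactly the observation the paper relies on.
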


\subsection{Quasi Grassmannians}

Let us adopt the notation of \cite{skewgras}.
The configuration ${\goth R}_n$ is a $\binkonf(n,+2)$-configuration.
Recall the role of the set $X = \{1,2 \}$ if $n$ is even and 
$X= \{ 0,1,2 \}$ if $n$ is odd. 
Namely, let us cite after \cite{skewgras} the following
\begin{fact}
  The complete $K_{n+1}$-graphs freely contained in ${\goth R}_n$ are the sets
  $S(i) = \{a\in\sub_2(Y)\colon i\in a\}$, where $\sub_2(Y)$ is the point set
  of ${\goth R}_n$, $X\subset Y$, and $i \in X$.
\end{fact}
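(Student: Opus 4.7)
Since the statement is cited from \cite{skewgras}, the natural plan is to verify it directly from the definition of ${\goth R}_n$ recalled there, and then to use the structural results of Section~3 (especially \ref{prop:maxK} and \ref{lem:maxK:struktura}) to rule out additional $K_{n+1}$-graphs. Because ${\goth R}_n$ is a $\binkonf(n,{+2})$-configuration and $\binkonf(n,{+2})=\binkonf(n{+}1,{+1})$, the analysis of minimal \PSTS's containing a $K_{n+1}$ (\ref{minSTS2pelny}, \ref{prop:maxK}) applies with $n$ replaced by $n{+}1$; in particular the number of freely contained $K_{n+1}$-graphs is at most $n{+}2$, and the cross-intersections of any such family embed a combinatorial Grassmannian.

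First I would verify that each $S(i)$ with $i\in X$ is a $K_{n+1}$-graph freely contained in ${\goth R}_n$. Here $|Y|=n{+}2$, so $|S(i)|=n{+}1$. Using the definition of lines in ${\goth R}_n$ from \cite{skewgras} one checks that any two points $\{i,a\},\{i,b\}\in S(i)$ are joined by a line of ${\goth R}_n$ (the third point on it being associated in a prescribed way with $\{a,b\}$), that these $\binom{n+1}{2}$ sides are pairwise distinct, and that they do not meet outside $S(i)$ — i.e.\ $S(i)$ is p-closed in ${\goth R}_n$. This is essentially a direct inspection of the line system described in \cite{skewgras}.

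Next I would show there is no other freely contained $K_{n+1}$-graph. Suppose $G=\struct{Z,\sub_2(Z)}$ were one. By \ref{prop:cross-compl2} applied to $(n+1)$-graphs in a $\binkonf(n{+}1,{+}1)$-configuration, $G$ shares exactly one vertex with each $S(i)$, $i\in X$; by \ref{prop:cross-compl-line} these common vertices are collinear whenever we take three freely contained $K_{n+1}$'s. This forces $Z$ to meet the point set of ${\goth R}_n$ in a pattern whose combinatorics, according to \ref{lem:maxK:struktura}, must produce a subspace of ${\goth R}_n$ of the right binomial type. Matching this against the explicit description of the maximal cliques of ${\goth R}_n$ in \cite{skewgras}, which distinguishes the `coordinate-type' cliques $S(i)$ with $i\in X$ from all other cliques, the only possibility is $Z=S(i)$ for some $i\in X$.

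The main obstacle is the second half: one really needs the specific description of ${\goth R}_n$ (its deviation from the `pure' Grassmannian $\GrasSpace(n{+}2,2)$, governed by the role of $X$) to see that no $K_{n+1}$-graph can be formed from vertices lying off all of the $S(i)$. The rest — i.e.\ producing the $S(i)$ as freely contained complete graphs, counting their intersections, and invoking \ref{prop:maxK}--\ref{lem:maxK:struktura} to bound their number — is essentially automatic once the line system of ${\goth R}_n$ has been recalled.
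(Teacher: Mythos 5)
The paper does not actually prove this Fact: it is imported verbatim from \cite{skewgras} (note the preceding sentence ``let us cite after \cite{skewgras} the following''), exactly as the parallel Fact about combinatorial Veronesians is imported from \cite{veradjac}. So there is no in-paper argument to compare yours against, and your instinct to reconstruct a proof from the definition of ${\goth R}_n$ plus the general machinery of Section~3 is the only sensible route.

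Your plan has the right shape: the index shift $\binkonf(n,+2)=\binkonf(n{+}1,+1)$ is correct, and \ref{prop:cross-compl2}, \ref{prop:cross-compl-line}, \ref{prop:maxK} and \ref{lem:maxK:struktura} do apply to give the pairwise-intersection pattern and the bound $m\le n+2$ on the number of freely contained $K_{n+1}$-graphs. But be aware that this general machinery cannot, even in principle, single out the sets $S(i)$ with $i\in X$: the Fact asserts there are only $|X|\in\{2,3\}$ such graphs (this is what the following Corollary records), far below the bound $n+2$, and a configuration with so few maximal complete subgraphs is not constrained enough by Section~3 alone. Hence the entire burden of the proof sits in the step you flag as ``the main obstacle'': verifying from the explicit line system of ${\goth R}_n$ that each $S(i)$, $i\in X$, is freely contained, and that no other $(n+1)$-set of points is, which requires the clique classification of ${\goth R}_n$ carried out in \cite{skewgras}. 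As written, your second half is a pointer to that classification rather than an argument, so the proposal is a correct and well-aimed plan but not a proof; completing it would mean reproducing the definition of ${\goth R}_n$ and redoing the clique analysis, which is precisely the content the paper chose to cite rather than rederive.
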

\begin{cor}
  Let $\goth M$ be a $\binkonf(n,0)$-quasi-Grassmannian, $n>2$. 
  If $n$ is even then $\goth M$
  freely contains exactly two $K_{n-1}$-graphs, and it freely contains exactly three
  $K_{n-1}$-graphs when $n$ is odd.
\end{cor}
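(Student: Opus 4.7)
My plan is to derive this corollary as an essentially immediate consequence of the preceding Fact, via a parameter reindexing.

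First, I would identify a $\binkonf(n,0)$-quasi-Grassmannian with $\goth R_{n-2}$ by comparing parameter schemes: setting $m := n-2$, the scheme $\binkonfo(m,+2)$ is $\konftyp(\binom{n}{2},n-2,\binom{n}{3},3) = \binkonfo(n,0)$. Hence $\goth M \cong \goth R_{n-2}$, and the ambient set $Y$ from the Fact satisfies $|Y| = n$.

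Next, I would apply the Fact verbatim with $n$ replaced by $n-2$: the $K_{(n-2)+1} = K_{n-1}$-subgraphs freely contained in $\goth M$ are precisely the sets $S(i)$ with $i \in X$, so their number equals $|X|$. By the parity convention recalled just before the Fact, $|X| = 2$ when $n-2$ is even and $|X| = 3$ when $n-2$ is odd; since $n$ and $n-2$ have the same parity, this matches the dichotomy asserted by the corollary.

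The argument is essentially pure parameter matching, so I do not anticipate any real obstacle. The only step that merits even a moment's care is reading ``the sets $S(i)$'' in the Fact as an exhaustive enumeration rather than as a mere list of examples, so that the count is exact and not merely a lower bound.
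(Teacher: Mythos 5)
Your proposal is correct and is exactly the route the paper intends: the corollary is stated without separate proof precisely because it is read off from the preceding Fact by the reindexing $\goth M\cong{\goth R}_{n-2}$ (so the $K_{n-1}$-subgraphs are the sets $S(i)$, $i\in X$, and their number is $|X|=2$ or $3$ according to the parity of $n-2$, hence of $n$). Your closing remark about reading the Fact as an exhaustive enumeration is the right thing to check, and it is indeed how the Fact is meant.
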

Besides, the above indicates one more similarity between 
combinatorial Veronesians and quasi Grassmannians represented
as a fan of configurations $10_3G$.  



\bigskip
\begin{small}
\noindent
Authors' address:\\
Ma{\l}gorzata Pra{\.z}mowska, Krzysztof Pra{\.z}mowski\\
Institute of Mathematics, University of Bia{\l}ystok\\
ul. Akademicka 2, 15-246 Bia{\l}ystok, Poland\\
{\ttfamily malgpraz@math.uwb.edu.pl},
{\ttfamily krzypraz@math.uwb.edu.pl}
\end{small}

\end{document}
